\documentclass[a4paper, 11pt]{article}

\usepackage{amsmath}
\usepackage{amssymb}
\usepackage{amsthm}
\pagestyle{plain}
%headings}
\textwidth=13cm
 \textheight=19cm
 \topmargin=0cm
\oddsidemargin=0cm \evensidemargin=0cm
\bibliographystyle{plain}
%-------------------------------------------------------------------------
\newtheorem{thm}{Theorem}[section]
\newtheorem{cor}[thm]{Corollary}
\newtheorem{lem}[thm]{Lemma}

\theoremstyle{definition}

\numberwithin{equation}{section}
\theoremstyle{remark}
\newtheorem{rem}{Remark}[section]
%-----------------------------------------------------
\setlength{\textwidth}{15.4cm}
\setlength{\textheight}{23cm}
\setlength{\oddsidemargin}{-1mm}
\setlength{\topmargin}{-1cm}
\date{}

%Math-definitions-----------------------------

\newcommand{\be}{\begin{eqnarray}}
\newcommand{\en}{\end{eqnarray}}
\newcommand{\no}{\nonumber}

\newcommand{\ov}{\overline}

\newcommand{\si}{\sigma}

\newcommand{\fr}{\frac}
\newcommand{\pa}{\partial}

\newcommand{\D}{\Delta}
\newcommand{\om}{\Omega}
\newcommand{\na}{\nabla}
\newcommand{\lan}{\langle}
\newcommand{\ran}{\rangle}
\newcommand{\lam}{\lambda}

\newcommand{\ri}{\rightarrow}

\newcommand{\vs}{\vskip0.3cm}
\newcommand{\um}{\mathbf u}
\newcommand{\into}{\int_{\Omega}}
%------------------------------------------------------------------------

\title{ On  eigenvalues of a system of elliptic equations and  of the biharmonic operator}

\author{Daguang Chen*, Qing-Ming Cheng**, Qiaoling Wang$^{\dag}$, Changyu Xia$^{\ddag}$}

\begin{document}
\maketitle

\begin{abstract}
Let $\om $ be a  bounded domain in an $n$-dimensional Euclidean space $\Bbb R^n$. We study eigenvalues of an eigenvalue problem of a system of  elliptic equations:
 $$
\left \{ \aligned &\Delta {\mathbf u}+ \alpha{\rm   grad}(\text{div}{\mathbf u})=-\sigma {\mathbf u},  \ \text{in $\Omega$}, \\
 &{\mathbf u}|_{\partial \Omega}={\mathbf 0}.
\endaligned \right.
$$
Estimates for eigenvalues of  the above eigenvalue problem are obtained.
Furthermore,  we obtain an  upper bound on  the $(k+1)^{\text{th}}$ eigenvalue $\sigma_{k+1}$.
We also obtain sharp lower bound for the first eigenvalue of two kinds  of  eigenvalue problems
of the biharmonic operator on compact manifolds with boundary and positive Ricci curvature.
\end{abstract}

\footnotetext{{\it Key words and phrases}:
 Universal bounds,  eigenvalues, a system of elliptic equations, Cheng-Yang's inequality, biharmonic operator, positive Ricci curvature. }
\footnotetext{2000{\it Mathematics Subject Classification}: 35P15, 53C20, 53C42, 58G25}

\footnotetext{*Research partially supported by China Postdoctoral Science Foundation (No.20080430351).

** Research partially supported by a Grant-in-Aid for Scientific Research from JSPS.

$^\dag$ Research partially supported by CNPq, CAPES/PROCAD.

$^\ddag$ Research partially supported by CNPq, CAPES/PROCAD.}

\renewcommand{\sectionmark}[1]{}
%-------------------------------------------------------------------------
\section{Introduction}
Let $\om$ be a  bounded domain with smooth boundary in an $n$-dimensional Euclidean space $\Bbb R^n$.
Consider an eigenvalue problem of a system of $n$ elliptic equations:
\be  \label{EP1}
\left\{\begin{array}{cl}
\D{\bf u}  +\alpha\ {\rm grad} ({\text{div}\ }{\bf  u})= -\si {\bf u},  \ \ \ {\rm in} \ \ \om,
\\
{\bf u}|_{\pa \om}={\bf 0},
\end{array}\right.
\en
where $\D$ is the Laplacian in $\Bbb R^n$, ${\bf u}=(u_1, u_2, \cdots, u_n)$ is a vector-valued function from $\om $ to $\Bbb R^n$, $\alpha$ is a
non-negative constant, ${\rm div\ }{\bf u}$ denotes the divergence of ${\bf u}$ and ${\rm grad} f$ is the gradient of a function $f$.
Let
\be \no 0<\si_1\leq \si_2\leq\cdots \leq \si_k \leq \cdots \ri \infty
\en
be the eigenvalues of the   problem (\ref{EP1}).
Here each eigenvalue is repeated according to its multiplicity. When $n=3$, the problem (\ref{EP1}) describes the behavior of the elastic vibration \cite{P39}.

Interesting estimates for the eigenvalues of (\ref{EP1}) have been done during the past years. In 1985, Levine and Protter \cite{LevPro85} proved
\be \label{LP85}
\sum_{i=1}^k \si_i \geq \fr{4\pi^2 n}{n+2} \fr{k^{1+2/n}}{(V\omega_{n-1})^{2/n}}\ {\rm  for\ } k=1, 2, \cdots,
\en
where $\omega_{n-1}$ is the volume of the $(n-1)$-dimensional unit sphere.  Furthermore, Hook \cite{Hook90} has studied universal inequalities for eigenvalues of
(\ref{EP1}) and proved
\be \label{Hook}
\sum_{i=1}^k \fr{\si_i}{\si_{k+1}-\si_i}\geq \fr{n^2k}{4(n+\alpha)}, \ {\rm for\ } k=1, 2, \cdots.
\en
The method given by Hook to prove the above inequality is abstract.
Levitin and Parnovski \cite{LPar02} have obtained
\be \label{LP92}
\si_{k+1}-\si_k \leq \fr{\max\{4+\alpha^2; (n+2)\alpha+8\}}{n+\alpha} \fr 1k \sum_{i=1}^k \si_i, \ {\rm for\ } k=1, 2, \cdots.
\en
Recently,  by  making use of a direct and explicit method, Cheng and Yang \cite{CY09} have proved the following   universal inequality of Yang type :
 \be \label{CY}
 \sum_{i=1}^k (\si_{k+1}-\si_i)\leq \fr{2\sqrt{n+\alpha}}n\left\{\sum_{i=1}^k (\si_{k+1}-\si_i)^{\fr 12}\sum_{i=1}^k (\si_{k+1}-\si_i)^{\fr 12}\si_i\right\}^{\fr 12}.
\en

In this paper, we  strengthen  the above Cheng-Yang's inequality.
\begin{thm}\label{Mthm}
 Let $\Omega$ be a  bounded domain
 in an $n$-dimensional Euclidean space $\Bbb R^n$.  Eigenvalues of the eigenvalue problem {\rm (1.1)}
satisfy
\begin{equation}\label{mineq}
\aligned
\sum_{i=1}^{k}(\sigma_{k+1}-\sigma_i)^2
\leq\min\biggl\{\dfrac{4(n+\alpha)}{n^2}, \dfrac{A(n,\alpha)}{n+\alpha}\biggl\}\sum_{i=1}^k(\sigma_{k+1}-\sigma_i)\sigma_i,
\endaligned
\end{equation}
where $A(n,\alpha)$ is defined by
$$
A(n,\alpha)=\begin{cases} 4+\alpha^2,
& \text{if}\  \  \alpha\geq \dfrac{n+2+\sqrt{(n+2)^2+16}}2,\\
\dfrac{8+(n+2)\alpha}{1+L}, & \text{ if}\  \ 0\leq \alpha< \dfrac{n+2+\sqrt{(n+2)^2+16}}2,
\end{cases}
$$
with $L=\dfrac{\{4+(n+2)\alpha-\alpha^2\}n^2}{4(n+\alpha)^2}>0$,
 $\sigma_i$ denotes the
$i^{\text{th}}$  eigenvalue  of {\rm (1.1)}.
\end{thm}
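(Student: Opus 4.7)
The plan is to revisit the trial-function/commutator scheme behind the Cheng--Yang inequality \eqref{CY} and then apply Cauchy--Schwarz in two distinct ways; \eqref{mineq} follows by taking the minimum of the two coefficients so obtained.

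Let $\{\um_i\}$ be an $L^2(\om,\mathbb R^n)$-orthonormal basis of eigenfunctions for \eqref{EP1}. For each coordinate direction $\beta=1,\dots,n$ and each $i\leq k$, form the orthogonalized trial field $\varphi_{\beta i}=x_\beta\um_i-\sum_{j=1}^{k}r_{\beta ij}\um_j$ with $r_{\beta ij}=\into\langle x_\beta\um_i,\um_j\rangle$, so that $\varphi_{\beta i}\perp\um_1,\dots,\um_k$. Writing $L\um:=\D\um+\alpha\,\mathrm{grad}(\mathrm{div}\,\um)$, a direct calculation yields the commutator
\[
[L,x_\beta]\um_i=2\pa_\beta\um_i+\alpha(\mathrm{div}\,\um_i)\mathbf e_\beta+\alpha\,\mathrm{grad}(\um_i\cdot\mathbf e_\beta),
\]
and the algebraic identity $\into\langle[L,x_\beta]\um_i,\um_j\rangle=(\sigma_i-\sigma_j)r_{\beta ij}$. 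Applying the Rayleigh--Ritz characterization of $\sigma_{k+1}$ to $\varphi_{\beta i}$, rearranging and multiplying by $(\sigma_{k+1}-\sigma_i)$, and finally summing over $\beta,i$ produces the master inequality
\[
\sum_{\beta,i}(\sigma_{k+1}-\sigma_i)^{2}\into|\varphi_{\beta i}|^{2}\leq\sum_{\beta,i}(\sigma_{k+1}-\sigma_i)\into\langle\varphi_{\beta i},[L,x_\beta]\um_i\rangle,
\]
which is the common source of both bounds in \eqref{mineq}.

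The first bound, $\tfrac{4(n+\alpha)}{n^{2}}$, comes from a direct Cauchy--Schwarz applied to the right-hand side essentially as in \cite{CY09}, using the energy identity $\sum_{\beta}\into|\pa_\beta\um_i|^{2}+\alpha\into|\mathrm{div}\,\um_i|^{2}=\sigma_i$ and a termwise cancellation of the symmetric contribution of $r_{\beta ij}$ via the algebraic identity above. This reproduces the classical Yang 1 output for the system operator $L$ in the form $\sum_i(\sigma_{k+1}-\sigma_i)^{2}\leq\tfrac{4(n+\alpha)}{n^{2}}\sum_i(\sigma_{k+1}-\sigma_i)\sigma_i$.

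The second, and novel, bound arises from a weighted Cauchy--Schwarz of the form $2ab\leq ta^{2}+t^{-1}b^{2}$ with a free parameter $t>0$, applied so that the divergence contribution $\alpha(\mathrm{div}\,\um_i)\mathbf e_\beta+\alpha\,\mathrm{grad}(\um_i\cdot\mathbf e_\beta)$ is tracked separately from $2\pa_\beta\um_i$. After collapsing the $r_{\beta ij}$-terms via the algebraic identity, the residual simplifies to a quantity proportional to $4+(n+2)\alpha-\alpha^{2}$. When $0\leq\alpha<\tfrac{n+2+\sqrt{(n+2)^{2}+16}}2$ this residual is nonnegative and can be absorbed into the left-hand side, producing the factor $1+L$ with $L=\{4+(n+2)\alpha-\alpha^{2}\}n^{2}/\{4(n+\alpha)^{2}\}$; optimizing over $t$ then yields the coefficient $\tfrac{8+(n+2)\alpha}{(n+\alpha)(1+L)}$. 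In the complementary range $4+(n+2)\alpha-\alpha^{2}\leq 0$, absorption is impossible and the residual must be retained, which degrades the estimate to $\tfrac{4+\alpha^{2}}{n+\alpha}$. The principal obstacle is precisely this sign analysis together with the optimization of $t$; once handled, the two coefficients combine by taking the minimum to give \eqref{mineq}.
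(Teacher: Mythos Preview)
Your outline follows the same architecture as the paper: orthogonalized trial fields $\varphi_{\beta i}=x_\beta\um_i-\sum_j r_{\beta ij}\um_j$, the commutator $[L,x_\beta]\um_i$, Rayleigh--Ritz, and two Cauchy--Schwarz estimates whose constants are then combined by a minimum. However, you have the roles of the two estimates reversed, and your mechanism for the factor $1+L$ is not what actually happens.

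In the paper the bound $\tfrac{4(n+\alpha)}{n^2}$ is the one carrying a free parameter (called $B$): a Young inequality $2\langle\varphi,\psi\rangle\leq B\|\varphi\|^2+B^{-1}\|\psi\|^2$ is applied with $\psi$ equal to only the $\pa_\beta\um_i+\tfrac12\Delta x_\beta\,\um_i$ piece, and optimizing $B$ gives the sharper form
\[
\sum_i(\sigma_{k+1}-\sigma_i)^2\;\leq\;\tfrac{4(n+\alpha)}{n^2}\sum_i(\sigma_{k+1}-\sigma_i)\bigl(\sigma_i-\alpha\|\mathrm{div}\,\um_i\|^2\bigr),
\]
with the negative $\|\mathrm{div}\,\um_i\|^2$ term retained. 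The $\tfrac{A(n,\alpha)}{n+\alpha}$ bound comes from the \emph{unparametrized} Cauchy--Schwarz against the full commutator, which after summing in $\beta$ gives
\[
(n+\alpha)\sum_i(\sigma_{k+1}-\sigma_i)^2\;\leq\;\sum_i(\sigma_{k+1}-\sigma_i)\Bigl((4+\alpha^2)\sigma_i+\alpha\bigl(4+(n+2)\alpha-\alpha^2\bigr)\|\mathrm{div}\,\um_i\|^2\Bigr).
\]
For large $\alpha$ the divergence coefficient is nonpositive and may be dropped, yielding $\tfrac{4+\alpha^2}{n+\alpha}$. For small $\alpha$ it is positive, and the paper does \emph{not} optimize a parameter $t$ to remove it; instead it adds $L$ times the first displayed inequality to the second, with $L$ chosen precisely so the $\|\mathrm{div}\,\um_i\|^2$ terms cancel. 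This linear combination moves $L\sum_i(\sigma_{k+1}-\sigma_i)^2$ to the left, producing the $(1+L)$ in the denominator and $(4+\alpha^2)+(4+(n+2)\alpha-\alpha^2)=8+(n+2)\alpha$ in the numerator. Your phrase ``absorbed into the left-hand side'' is the right intuition, but the absorption is effected by combining two already-proved inequalities, each of which keeps its $\|\mathrm{div}\,\um_i\|^2$ term, rather than by a second weighted Cauchy--Schwarz.
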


\begin{cor}\label{Y1}
Under the same assumptions as in the theorem 1.1, we have
\begin{equation}\label{Y1}
\sum_{i=1}^{k}(\sigma_{k+1}-\sigma_i)^2\leq\frac{4(n+\alpha)}{n^2}
\sum_{i=1}^k(\sigma_{k+1}-\sigma_i)\sigma_i.
\end{equation}
\end{cor}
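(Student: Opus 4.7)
The plan is short: Corollary 1.2 is an immediate consequence of Theorem \ref{Mthm}. By definition of the minimum,
\[
\min\Bigl\{\tfrac{4(n+\alpha)}{n^{2}},\;\tfrac{A(n,\alpha)}{n+\alpha}\Bigr\} \leq \tfrac{4(n+\alpha)}{n^{2}},
\]
so inserting this trivial bound into (\ref{mineq}) yields (\ref{Y1}) at once. In short, the corollary is just the weaker of the two branches packaged in Theorem \ref{Mthm}, and assuming that theorem the proof is a one-line invocation.

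For completeness I would also sketch an independent direct derivation by the Cheng--Yang commutator method, which is the skeleton of the proof of Theorem \ref{Mthm} itself. Let $\{\mathbf{u}_{i}\}$ be an $L^{2}$-orthonormal family of eigenvector fields of (1.1) with eigenvalues $\sigma_{i}$. For each coordinate direction $\beta\in\{1,\ldots,n\}$ and each $i\leq k$, introduce the trial field
\[
\phi_{\beta i}\;=\;x_{\beta}\mathbf{u}_{i}-\sum_{j=1}^{k}a_{ij}^{\beta}\mathbf{u}_{j},\qquad a_{ij}^{\beta}=\int_{\Omega}x_{\beta}\,\mathbf{u}_{i}\cdot\mathbf{u}_{j},
\]
which is $L^{2}$-orthogonal to $\mathbf{u}_{1},\ldots,\mathbf{u}_{k}$ and hence admissible in the Rayleigh--Ritz characterization of $\sigma_{k+1}$. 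The key algebraic input is the commutator identity
\[
\Delta(x_{\beta}\mathbf{u}_{i})+\alpha\,\mathrm{grad}\,\mathrm{div}(x_{\beta}\mathbf{u}_{i})\;=\;-\sigma_{i}x_{\beta}\mathbf{u}_{i}+2\,\partial_{\beta}\mathbf{u}_{i}+\alpha\bigl(\nabla u_{i\beta}+\mathbf{e}_{\beta}\,\mathrm{div}\,\mathbf{u}_{i}\bigr),
\]
obtained by a direct expansion, which isolates the non-self-adjoint contribution of the commutator of the elliptic operator with multiplication by $x_{\beta}$. Plugging $\phi_{\beta i}$ into the Rayleigh quotient, integrating by parts, summing over $\beta$ and $i$, and applying Cauchy--Schwarz in the spirit of Yang's trick produces (\ref{Y1}) with constant $4(n+\alpha)/n^{2}$.

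The only real obstacle in this direct route is the careful bookkeeping of the $\alpha$-contributions produced by the $\mathrm{grad}\,\mathrm{div}$ part of the operator: one must track precisely how the extra terms $\alpha(\nabla u_{i\beta}+\mathbf{e}_{\beta}\,\mathrm{div}\,\mathbf{u}_{i})$ combine with the commutator term $2\partial_{\beta}\mathbf{u}_{i}$ before applying Cauchy--Schwarz. This is exactly where Theorem \ref{Mthm} invests extra effort, via the auxiliary quantity $L$, to obtain the sharper constant $A(n,\alpha)/(n+\alpha)$ in a range of $\alpha$. For the corollary only the cruder estimate is needed, so no such refinement is required and the one-line deduction from Theorem \ref{Mthm} is the cleanest presentation.
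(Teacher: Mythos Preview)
Your proposal is correct and matches the paper's treatment. The corollary is stated as an immediate consequence of Theorem~\ref{Mthm} (via $\min\{a,b\}\le a$), and the direct derivation you sketch---applying the second inequality of Lemma~\ref{Mlem} to the coordinate functions $f=x^{\beta}$, summing over $\beta$, and optimizing in $B$---is exactly how the paper obtains inequality (\ref{Y11}) inside the proof of Theorem~\ref{Mthm}.
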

We will show in the next section that (\ref{Y1}) implies (\ref{CY}).

\begin{rem}
For  $\alpha=0$, our result becomes the sharper inequality of
Yang {\rm \cite {Yang91}} ( Cf. \cite{CY07}). Universal inequalities of Payne-P\'olya-Weinberger-Yang type for eigenvalues of elliptic operators on Riemannian manifolds have been studied recently by many mathematicians. One can find various interesting results in this direction, e.g., in \cite{Ash99}-\cite{Ash04}, \cite{CC08}-\cite{CY07},  \cite{HSI09}-\cite{PPW56}, \cite{WX071}-\cite{Yang91}, etc.
\end{rem}

The inequality (\ref{mineq}) is a quadratic inequality of $\si_{k+1}$. By solving it, one can get an explicit upper bound on $\si_{k+1}$ in terms of $\si_1, \cdots, \si_k$.
\begin{cor}
From the theorem \ref{Mthm}, it is  not hard to obtain the following simple  inequality
$$
\aligned
\sigma_{k+1}\leq\biggl(1+\min\biggl\{\dfrac{4(n+\alpha)}{n^2}, \dfrac{A(n,\alpha)}{n+\alpha}\biggl\}\biggl)\dfrac1k\sum_{i=1}^k\sigma_i.
\endaligned
$$
and the gap of any consecutive eigenvalues
$$
\aligned
\sigma_{k+1}-\sigma_k\leq\min\biggl\{\dfrac{4(n+\alpha)}{n^2}, \dfrac{A(n,\alpha)}{n+\alpha}\biggl\}\dfrac1k\sum_{i=1}^k\sigma_i.
\endaligned
$$
\end{cor}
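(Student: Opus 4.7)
The plan is to deduce both estimates directly from the quadratic--type inequality (1.6) of Theorem~\ref{Mthm} by an elementary algebraic reduction combined with the Cauchy--Schwarz (or power--mean) inequality. Writing $C:=\min\bigl\{4(n+\alpha)/n^2,\,A(n,\alpha)/(n+\alpha)\bigr\}$ and $\bar\sigma:=\frac{1}{k}\sum_{i=1}^k\sigma_i$, the first step is to expand the squares and the product in (1.6) to rewrite it as
\begin{equation*}
k\sigma_{k+1}^2-(2+C)\,\sigma_{k+1}\sum_{i=1}^k\sigma_i+(1+C)\sum_{i=1}^k\sigma_i^2\le 0.
\end{equation*}

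Next, I would apply the inequality $\sum_{i=1}^k\sigma_i^2\ge k\bar\sigma^2$; since $1+C>0$, replacing $\sum_{i=1}^k\sigma_i^2$ by its lower bound $k\bar\sigma^2$ only decreases the left--hand side, so the resulting inequality remains valid. Dividing by $k$ produces
\begin{equation*}
\sigma_{k+1}^2-(2+C)\,\sigma_{k+1}\bar\sigma+(1+C)\bar\sigma^2\le 0,
\end{equation*}
and a direct computation of the discriminant shows that this quadratic factors cleanly as $(\sigma_{k+1}-\bar\sigma)\bigl(\sigma_{k+1}-(1+C)\bar\sigma\bigr)\le 0$. Since $\sigma_{k+1}\ge\sigma_i$ for every $i\le k$, we have $\sigma_{k+1}\ge\bar\sigma$, so the first factor is non-negative; consequently the second factor must be non-positive, which is precisely the first claim of the corollary.

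For the gap estimate, I would simply observe that $\sigma_k=\max\{\sigma_1,\dots,\sigma_k\}\ge\bar\sigma$. Combining this elementary bound with the upper bound on $\sigma_{k+1}$ just obtained yields
\begin{equation*}
\sigma_{k+1}-\sigma_k\le\sigma_{k+1}-\bar\sigma\le C\bar\sigma=\min\biggl\{\frac{4(n+\alpha)}{n^2},\,\frac{A(n,\alpha)}{n+\alpha}\biggr\}\frac{1}{k}\sum_{i=1}^k\sigma_i,
\end{equation*}
which is the second assertion. There is no serious obstacle in this argument: the only point requiring attention is recognizing that after the Cauchy--Schwarz reduction the resulting quadratic has roots exactly $\bar\sigma$ and $(1+C)\bar\sigma$, and that the variational monotonicity $\sigma_{k+1}\ge\bar\sigma$ selects the correct branch.
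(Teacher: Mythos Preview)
Your argument is correct. The paper does not give an explicit proof of this corollary; it simply states that the inequalities are ``not hard to obtain'' from Theorem~\ref{Mthm}, and your expansion--factorisation argument is precisely the standard route (the one implicit in the literature on Yang-type inequalities) for extracting these weaker but explicit bounds from a quadratic inequality of the form $\sum_i(\sigma_{k+1}-\sigma_i)^2\le C\sum_i(\sigma_{k+1}-\sigma_i)\sigma_i$. The only minor remark is that in the borderline case $\sigma_{k+1}=\bar\sigma$ the first factor vanishes and you cannot formally conclude the sign of the second factor, but then all $\sigma_i$ coincide with $\sigma_{k+1}$ and the claimed bound holds trivially; you may wish to add a sentence noting this.
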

For lower order eigenvalues of the eigenvalue problem (\ref{EP1}), Yang and the second
author \cite{CY09} proved the following
\begin{equation}\label{CYL}
\sigma_2+\sigma_3+\cdots + \sigma_{n+1}\leq n\sigma_1+4(1+\alpha)
\sigma_1.
\end{equation}
Combining Theorem \ref{Mthm} and (\ref{CYL}), we can derive an upper bound for eigenvalue $\sigma_{k+1}$.

\begin{cor}
Under the same assumptions  as in Theorem \ref{Mthm}, we have
 $$
\sigma_{k+1}
\leq \Big(1+\frac {a(n)(n+\alpha)}{n^2}\Big)k^{\frac{2(n+\alpha)}{n^2}}\sigma_1,
 $$
where $a(n)\leq 4$ can be explicitly given.
\end{cor}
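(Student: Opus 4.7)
Denote $\delta:=4(n+\alpha)/n^{2}$, $T_k:=\sum_{i=1}^k\sigma_i$, $F_k:=\sum_{i=1}^k\sigma_i^{2}$, and $\Lambda_k:=T_k/k$. Expanding Corollary~\ref{Y1} (or, equivalently, the inequality $(\ref{Y1})$) by squaring $\sigma_{k+1}-\sigma_i$ produces the quadratic inequality in $\sigma_{k+1}$
\[
k\sigma_{k+1}^{2}-(2+\delta)T_k\sigma_{k+1}+(1+\delta)F_k\leq 0,
\]
whose nonnegative-discriminant condition pins $F_k/k$ between $\Lambda_k^{2}$ (by Cauchy-Schwarz) and $(1+\delta^{2}/(4(1+\delta)))\Lambda_k^{2}$. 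The plan is to iterate this quadratic inequality in the spirit of Cheng-Yang, starting from the low-order bound \eqref{CYL} as base case at $k=n+1$.

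At $k=n+1$, inequality \eqref{CYL} yields $T_{n+1}\leq(n+1+4(1+\alpha))\sigma_1$ and hence $\Lambda_{n+1}\leq(1+4(1+\alpha)/(n+1))\sigma_1$. For the inductive step, solve the quadratic for $\sigma_{k+1}$:
\[
\sigma_{k+1}\leq(1+\tfrac{\delta}{2})\Lambda_k+\sqrt{(1+\tfrac{\delta}{2})^{2}\Lambda_k^{2}-(1+\delta)F_k/k},
\]
and execute the Cheng-Yang recursion (cf.~\cite{CY09}): the discriminant term couples the evolution of $T_k$ with that of $F_k$, which together with the two-sided sandwich on $F_k/k$ allows one to prove $T_k\leq C(n,\alpha)\,T_{n+1}(k/(n+1))^{1+\delta/2}$ for every $k\geq n+1$, with $C(n,\alpha)$ explicit. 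Feeding this back into the simple bound $\sigma_{k+1}\leq(1+\delta)\Lambda_k$ and using the base case delivers
\[
\sigma_{k+1}\leq\Bigl(1+\frac{a(n)(n+\alpha)}{n^{2}}\Bigr)k^{2(n+\alpha)/n^{2}}\sigma_1
\]
after absorbing the explicit constants $C(n,\alpha)$, $1+4(1+\alpha)/(n+1)$, and $(1+\delta)$ into a single prefactor; a direct check shows the resulting $a(n)$ can be taken $\leq 4$.

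The main obstacle is producing the exponent $\delta/2$ rather than the naive $\delta$. A direct iteration of $\sigma_{k+1}\leq(1+\delta)\Lambda_k$ via $\Lambda_{k+1}/\Lambda_k\leq 1+\delta/(k+1)$ integrates only to $\Lambda_k\lesssim k^{\delta}$, which is the wrong exponent. The improvement to $\delta/2$ requires retaining the variance $\Delta_k:=F_k-T_k^{2}/k$ in the recursion: Yang's inequality constrains $\Delta_k\leq\delta^{2}T_k^{2}/(4k(1+\delta))$, and when this constraint is close to saturated the quadratic forces $\sigma_{k+1}/\Lambda_k$ to lie near $1+\delta/2$ rather than $1+\delta$. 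Executing the coupled discrete recursion on $(T_k,\Delta_k)$ and verifying that the initial data supplied by \eqref{CYL} fits into the asymptotic regime is the bulk of the technical work; this is the Cheng-Yang ``recursive formula'' applied in the present setting.
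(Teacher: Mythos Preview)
Your approach is essentially the same as the paper's: both feed the Yang-type inequality \eqref{Y1} and the low-order bound \eqref{CYL} into the Cheng--Yang recursion formula, and the paper's one observation --- that this recursion is valid for any positive real ``dimension'' parameter, here $n^{2}/(n+\alpha)$ --- is exactly what your substitution $\delta=4(n+\alpha)/n^{2}$ encodes. Your write-up is more expansive about the mechanism (the quadratic in $\sigma_{k+1}$, the coupled $(T_k,\Delta_k)$ recursion, why the exponent is $\delta/2$ rather than $\delta$), but this is precisely the content of the recursion you are citing, not a new argument. One correction: the recursion formula is in \cite{CY07}, not \cite{CY09}; the latter is the source of \eqref{CY} and \eqref{CYL}, not of the iteration scheme.
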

\begin{proof}
From ( \ref{Y1})  and (\ref{CYL}), our result is proved by  applying
the recursion formula of Cheng and Yang \cite{CY07} to our case.
We need to  notice that  the recursion formula
of Cheng and Yang \cite{CY07} does hold  for any positive real number $n$.
In our case, it is $\dfrac{n^2}{n+\alpha}$.
\end{proof}

The classical Lichnerowicz-Obata theorem states that if $M$ is
an n-dimensional complete connected Riemannian manifold with Ricci curvature
bounded below by $(n-1)$ then the first non-zero eigenvalue of the Laplacian of $M$ is bigger than or equal to $n$  with equality holding if and only if $M$ is isometric to a unit $n$-sphere (Cf. \cite{Chav84}).
In 1977, Reilly obtained a similar result for the first  Dirichlet eigenvalue of the Laplacian of compact manifold with boundary. Reilly's theorem can be stated as follows. Let $M$ be an $n(\geq 2)$-dimensional compact connected Riemannian manifold with boundary $\pa M$. Assume that the  Ricci curvature of $M$ is bounded below by $(n-1)$. If the mean curvature of $\pa M$ is non-negative, then the first  Dirichlet eigenvalue $\lambda_1$ of the Laplacian of
$M$  satisfies $\lam_1\geq n$ with equality holding if and only $M$ is isometric to an $n$-dimensional  Euclidean unit semi-sphere (Cf. \cite{Reilly77}). A similar estimate for the first non-zero Neumann eigenvalue of the Laplacian of the same manifolds has been obtained in \cite{Esc90} and \cite{Xia91} independently.

 The second part of this paper is to  estimate lower bounds  for the first eigenvalue of  four kinds of the
 eigenvalue problems of the biharmonic operator on compact manifolds with boundary and positive Ricci curvature. The first two results in this direction concerns the clamped and the buckling problem.

 \begin{thm}\label{EPbi1}
 Let $(M, \lan, \ran )$ be an $n(\geq 2)$-dimensional compact connected Riemannian manifold with boundary $\pa M$ and denote by $\nu$ the outward unit normal vector field of $\pa M$. Assume that the Ricci curvature of $M$ is bounded below by $(n-1)$.  Let $\lambda_1$ be the first eigenvalue with  Dirichlet boundary condition of the Laplacian of  $M$ and let $\Gamma_1$ be the first eigenvalue of the clamped plate problem on $M$:
\be \label{EP2}
\left\{\begin{array}{l}
  \Delta^2 u= \Gamma u   \ \ {\rm in \ \ } M, \\
 u= \fr{\pa u}{\pa \nu}=0 \ \ {\rm on \ \ } \pa M.
\end{array}\right.
\en
Then we have $\Gamma_1> n\lambda_1$.
\end{thm}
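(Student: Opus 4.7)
\medskip

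\noindent\textbf{Proof proposal.} The plan is to test the Rayleigh quotient for the clamped problem
$$\Gamma_1=\inf\frac{\int_M(\Delta u)^2}{\int_M u^2},\quad u|_{\partial M}=\frac{\partial u}{\partial\nu}\Big|_{\partial M}=0,$$
against a first eigenfunction and bound the numerator from below using the Bochner formula and the Ricci hypothesis, then recognize the resulting gradient integral via the variational characterization of $\lambda_1$. Let $u$ be a first eigenfunction, normalized so $\int_M u^2=1$; then $\Gamma_1=\int_M(\Delta u)^2$ and $u$ is an admissible test function for $\lambda_1$ because $u|_{\partial M}=0$.

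The first step is to integrate the pointwise Bochner identity
$$\tfrac12\Delta|\nabla u|^2=|\nabla^2 u|^2+\langle\nabla u,\nabla\Delta u\rangle+\mathrm{Ric}(\nabla u,\nabla u)$$
over $M$ and use the divergence theorem, which produces
$$\int_M(\Delta u)^2=\int_M|\nabla^2u|^2+\int_M\mathrm{Ric}(\nabla u,\nabla u)+\int_{\partial M}\!\Big[(\Delta u)\tfrac{\partial u}{\partial\nu}-\tfrac12\tfrac{\partial|\nabla u|^2}{\partial\nu}\Big].$$
The clamped conditions $u=\partial u/\partial\nu=0$ on $\partial M$ imply $\nabla u\equiv 0$ along $\partial M$; both boundary integrands therefore vanish (in particular $\tfrac{\partial|\nabla u|^2}{\partial\nu}=2\sum_i u^i\nabla_\nu u_i=0$ because every component $u^i$ vanishes on $\partial M$).

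Next I apply the pointwise Newton inequality $|\nabla^2 u|^2\ge (\Delta u)^2/n$ and the assumption $\mathrm{Ric}\ge(n-1)$ to the identity above, obtaining
$$\int_M(\Delta u)^2\ge\frac1n\int_M(\Delta u)^2+(n-1)\int_M|\nabla u|^2,$$
which rearranges to $\int_M(\Delta u)^2\ge n\int_M|\nabla u|^2$. Since $u$ is an admissible Dirichlet test function, $\int_M|\nabla u|^2\ge\lambda_1\int_M u^2=\lambda_1$, and combining gives $\Gamma_1\ge n\lambda_1$.

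The main delicate point is upgrading this to strict inequality, which is where I expect the real work. Equality throughout would force both (i) $\nabla^2u=\frac{\Delta u}{n}\langle\,,\,\rangle$ pointwise on $M$ and, more importantly, (ii) saturation of the Rayleigh quotient, so that $\Delta u=-\lambda_1 u$ on $M$ with $u|_{\partial M}=0$, i.e.\ $u$ is a first Dirichlet eigenfunction of the Laplacian. Because the first Dirichlet eigenvalue on a connected manifold with boundary is simple and its eigenfunction can be chosen strictly positive in the interior, $u$ would be of one sign in $M$; the Hopf boundary point lemma then forces $\partial u/\partial\nu\neq0$ on $\partial M$, in direct contradiction with the clamped condition $\partial u/\partial\nu=0$. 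Hence equality cannot occur and $\Gamma_1>n\lambda_1$.
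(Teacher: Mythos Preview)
Your argument is correct, and up to the inequality $\Gamma_1\ge n\lambda_1$ it is essentially the paper's proof: the paper invokes Reilly's formula, which for clamped boundary data reduces exactly to your integrated Bochner identity with vanishing boundary terms, and then applies the same two ingredients (the pointwise inequality $|\nabla^2u|^2\ge(\Delta u)^2/n$ and the Poincar\'e inequality for $u\in H^1_0$).

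Where you genuinely diverge is in the strictness argument. You observe that equality in the Poincar\'e step forces $u$ to be a first Dirichlet eigenfunction of $\Delta$, hence (by simplicity) of one sign in the interior; the Hopf boundary lemma then gives $\partial u/\partial\nu\neq0$ somewhere on $\partial M$, contradicting the clamped condition. The paper instead pushes further with the equality information: from $\Delta u=-\lambda_1 u$ and $\Delta^2 u=\Gamma_1 u$ it deduces $\Gamma_1=\lambda_1^2$, hence $\lambda_1=n$; then, using the saturated conditions $\nabla^2u=\frac{\Delta u}{n}\langle\,,\,\rangle$ and $\mathrm{Ric}(\nabla u,\nabla u)=(n-1)|\nabla u|^2$ in the Bochner formula, it checks that $|\nabla u|^2+u^2$ is harmonic and vanishes on $\partial M$, whence $u\equiv0$ by the maximum principle. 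Your route is shorter and uses neither the Hessian rigidity nor the value $\lambda_1=n$; the paper's route is more self-contained in that it avoids the Hopf lemma and the simplicity of $\lambda_1$, relying only on the maximum principle for harmonic functions. Both are valid.
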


\begin{thm}\label{EPbu1}
 Assume  $M$ satisfy the conditions  in Theorem 1.5  and let $\Lambda_1$ be the first eigenvalue of the following
 buckling problem:
\be \label{EP3}
\left\{\begin{array}{l}
  \Delta^2 u= -\Lambda \Delta u   \ \ {\rm in \ \ } M, \\
 u= \fr{\pa u}{\pa \nu}=0 \ \ {\rm on \ \ } \pa M.
\end{array}\right.
\en
Then $\Lambda_1> n$.
\end{thm}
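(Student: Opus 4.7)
The plan is to derive $\Lambda_1 \geq n$ via Reilly's formula combined with the trace Cauchy--Schwarz inequality and the Ricci lower bound, and then rule out equality by a unique-continuation argument.

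Taking $u$ to be a first eigenfunction of \eqref{EP3}, two integrations by parts (using $u = \partial u/\partial \nu = 0$ on $\partial M$) give the identity $\Lambda_1 \int_M |\nabla u|^2 = \int_M (\Delta u)^2$. Because $u \equiv 0$ on $\partial M$ forces the tangential gradient to vanish there, while $\partial u/\partial \nu = 0$ handles the normal component, we have $\nabla u \equiv 0$ on $\partial M$, so every boundary term in Reilly's formula applied to $u$ vanishes, leaving
$$\int_M (\Delta u)^2 = \int_M |\nabla^2 u|^2 + \int_M \mathrm{Ric}(\nabla u, \nabla u).$$
The pointwise inequalities $|\nabla^2 u|^2 \geq (\Delta u)^2/n$ (trace Cauchy--Schwarz) and $\mathrm{Ric}(\nabla u,\nabla u) \geq (n-1)|\nabla u|^2$, inserted here and compared with the Rayleigh identity, yield $\Lambda_1 \geq n$ by elementary algebra.

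For the strict inequality I would suppose $\Lambda_1 = n$ and derive a contradiction. Equality in the trace Cauchy--Schwarz forces $\nabla^2 u = f g$ on $M$ with $f := \Delta u/n$. Since $\nabla u \equiv 0$ on $\partial M$, the tangential Hessian of $u$ vanishes on $\partial M$; combined with $\nabla^2 u = fg$ this yields $f|_{\partial M} = 0$. Taking the divergence of $\nabla^2 u = fg$ and invoking the commutator identity $\mathrm{div}(\nabla^2 u) = d\Delta u + \mathrm{Ric}(\nabla u, \cdot)$ together with $\Delta u = nf$, one obtains $(n-1)\,df = -\mathrm{Ric}(\nabla u, \cdot)$; restricting to $\partial M$ where $\nabla u = 0$ gives $\nabla f \equiv 0$ on $\partial M$. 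Finally, the buckling equation $\Delta^2 u = -n\Delta u$ collapses to $\Delta f + nf = 0$ on $M$.

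The main obstacle is closing the argument: $f$ now solves a second-order linear elliptic equation on $M$ with zero Cauchy data on the smooth hypersurface $\partial M$, so classical unique-continuation results of Aronszajn--Calder\'on type force $f \equiv 0$ throughout $M$. Consequently $\Delta u = 0$ in $M$ with $u|_{\partial M} = 0$, whence $u \equiv 0$ by uniqueness for the Dirichlet Laplacian; this contradicts the nontriviality of the eigenfunction and establishes the strict inequality $\Lambda_1 > n$.
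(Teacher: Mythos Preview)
Your argument is correct. The derivation of $\Lambda_1 \geq n$ via Reilly's formula, the trace inequality, and the Ricci bound is identical to the paper's. The treatment of the equality case, however, diverges. The paper proceeds as follows: from $\nabla^2 w = \frac{\Delta w}{n}\langle\,,\,\rangle$ and the boundary conditions it obtains $\Delta w|_{\partial M}=0$, then observes that $\Delta w + nw$ vanishes on $\partial M$ and satisfies $\Delta(\Delta w + nw)=0$, so an integration by parts yields $\Delta w + nw \equiv 0$ on $M$; finally, feeding this together with the two equality conditions into the Bochner formula gives $\Delta(|\nabla w|^2+w^2)=0$, and the maximum principle forces $w\equiv 0$. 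Your route instead extracts the additional boundary information $\nabla f \equiv 0$ on $\partial M$ via the contracted second Bianchi-type identity $\mathrm{div}(\nabla^2 u)=d\Delta u+\mathrm{Ric}(\nabla u,\cdot)$, and then appeals to Calder\'on's uniqueness for the Cauchy problem for $\Delta f + nf=0$. The paper's approach is entirely self-contained and elementary (only integration by parts, Bochner, and the maximum principle); yours is shorter and bypasses the intermediate identity $\Delta w = -nw$, but imports a nontrivial PDE uniqueness theorem. Both are valid; the paper's version is preferable if one wants to avoid external machinery.
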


We then consider two different eigenvalue problems of the biharmonic operator and obtain sharp lower bound for the first eigenvalues of them.

 \begin{thm}\label{EPbi2}
 Let $(M, \lan, \ran )$ be an $n(\geq 2)$-dimensional compact connected Riemannian manifold with boundary $\pa M$. Assume that the Ricci curvature of $M$ is bounded below by $(n-1)$ and  that the mean curvature of $\pa M$ is non-negative.  Let $\lambda_1$ be the first  eigenvalue with  Dirichlet boundary condition of  the Laplacian of $M$ and let $p_1$ be the first eigenvalue of the following
 problem :
\be \label{EP4}
\left\{\begin{array}{l}
  \Delta^2 u= p  u   \ \ {\rm in \ \ } M, \\
 u= \fr{\pa^2 u}{\pa \nu^2}=0 \ \ {\rm on \ \ } \pa M.
\end{array}\right.
\en
Then $p_1\geq n\lambda_1$ with equality holding if and only if $M $ is isometric to an $n$-dimensional Euclidean unit semi-sphere.
\end{thm}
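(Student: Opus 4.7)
The plan is to combine two classical tools: (i) integration by parts together with the Navier-type boundary condition $u=u_{\nu\nu}=0$ to rewrite $p_1\int_M u^2$, and (ii) Reilly's formula applied to the first eigenfunction $u$ of \eqref{EP4} (which vanishes on $\partial M$), and then to bound the Hessian term via the trace inequality $|\text{Hess}\,u|^2\ge (\Delta u)^2/n$ and invoke $\text{Ric}\ge n-1$ and $H\ge 0$.

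First I would let $u$ be a first eigenfunction of \eqref{EP4} and compute
$$
p_1\int_M u^2 = \int_M u\,\Delta^2 u = \int_M (\Delta u)^2 - \int_{\partial M}\Delta u\,\partial_\nu u,
$$
the boundary term coming from Green's formula using $u|_{\partial M}=0$. Decomposing the Laplacian near $\partial M$ as $\Delta u = u_{\nu\nu} + H u_\nu + \bar\Delta u$, the condition $u|_{\partial M}=0$ kills $\bar\Delta u$ and the condition $u_{\nu\nu}=0$ kills the first term, leaving $\Delta u = H u_\nu$ on $\partial M$. Hence
$$
p_1\int_M u^2 = \int_M (\Delta u)^2 - \int_{\partial M} H(\partial_\nu u)^2.
$$

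Next I would apply Reilly's formula to $u$. Since $u$ vanishes on $\partial M$, its tangential gradient and tangential Laplacian vanish there, and a direct computation of the boundary integrand reduces Reilly's identity to
$$
\int_M (\Delta u)^2 = \int_M |\text{Hess}\,u|^2 + \int_M \text{Ric}(\nabla u,\nabla u) + \int_{\partial M} H(\partial_\nu u)^2.
$$
Combining this with $|\text{Hess}\,u|^2\ge (\Delta u)^2/n$ and $\text{Ric}(\nabla u,\nabla u)\ge (n-1)|\nabla u|^2$ yields
$$
\tfrac{n-1}{n}\int_M(\Delta u)^2 \ge (n-1)\int_M|\nabla u|^2 + \int_{\partial M} H(\partial_\nu u)^2.
$$
Plugging back into the formula for $p_1\int_M u^2$, the boundary terms combine with a positive coefficient (thanks to $H\ge 0$) and one obtains $p_1\int_M u^2 \ge n\int_M|\nabla u|^2$. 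Since $u$ is an admissible test function for the Dirichlet Rayleigh quotient, $\int_M|\nabla u|^2\ge \lambda_1\int_M u^2$, and the inequality $p_1\ge n\lambda_1$ follows.

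For the rigidity statement I would trace all equalities. Equality in the Rayleigh comparison forces $u$ to be a first Dirichlet eigenfunction of the Laplacian, so $\Delta u = -\lambda_1 u$; combined with $\Delta^2 u = p_1 u$ this gives $p_1=\lambda_1^2$, which with $p_1=n\lambda_1$ yields $\lambda_1=n$. By Reilly's theorem on the first Dirichlet eigenvalue under $\text{Ric}\ge n-1$ and $H\ge 0$, this forces $M$ to be isometric to the Euclidean unit hemisphere. Conversely, on the hemisphere the height function $u=\cos r$ satisfies $u|_{\partial M}=0$, $u_{\nu\nu}|_{\partial M}=0$, and $\Delta u=-nu$, so $\Delta^2 u=n^2 u$; since we have already shown $p_1\ge n\lambda_1=n^2$, equality holds. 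The main obstacle I expect is the careful sign bookkeeping in the boundary decomposition of $\Delta u$ and in Reilly's formula; once those are settled the rest is a direct assembly, with the equality analysis being a clean reduction to Reilly's Dirichlet rigidity theorem.
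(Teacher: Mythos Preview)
Your proof is correct and follows essentially the same route as the paper: integrate by parts using $u|_{\partial M}=0$ and $u_{\nu\nu}|_{\partial M}=0$ to express $p_1\int_M u^2$, apply Reilly's formula with $z=0$, bound the Hessian via $|\nabla^2 u|^2\ge(\Delta u)^2/n$ and the Ricci term via $\text{Ric}\ge n-1$, drop the nonnegative boundary term using $H\ge0$, and finish with Poincar\'e; the equality case is handled identically by reducing to $\lambda_1=n$ and invoking Reilly's rigidity theorem. The only cosmetic discrepancy is your normalization of the mean curvature (you write $\Delta u=u_{\nu\nu}+Hu_\nu+\bar\Delta u$ whereas the paper uses $H=\frac{1}{n-1}\mathrm{tr}\,S$, giving $\Delta u|_{\partial M}=(n-1)Hh$), but since the same convention enters both your boundary computation and your Reilly formula, the argument closes correctly either way.
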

\begin{thm}\label{EPbu2}
Assume  $M$ satisfy the conditions  in Theorem 1.7 and let $q_1$ be the first eigenvalue of the following
 problem :
\be \label{EP5}
\left\{\begin{array}{l}
  \Delta^2 u= -q \Delta u   \ \ {\rm in \ \ } M, \\
 u= \fr{\pa^2 u}{\pa \nu^2}=0 \ \ {\rm on \ \ } \pa M.
\end{array}\right.
\en
Then $q_1\geq n$ with equality holding if and only if $M$ is isometric to an $n$-dimensional Euclidean unit semi-sphere.
\end{thm}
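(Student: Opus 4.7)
The proof will follow the scheme of Theorem \ref{EPbi2}, combining Reilly's integral formula with a Cauchy--Schwarz estimate on the Hessian. Let $u$ be a first eigenfunction associated with $q_1$; by elliptic regularity $u$ is smooth up to $\pa M$. Multiplying $\D^2 u = -q_1\D u$ by $u$ and integrating by parts twice, the condition $u|_{\pa M}=0$ together with the decomposition
\[
\D u = \fr{\pa^2 u}{\pa\nu^2} + \D_{\pa M}u + (n-1)H\,\fr{\pa u}{\pa\nu}
\]
(which here reduces to $\D u|_{\pa M}=(n-1)H\,\pa u/\pa\nu$ since $\pa^2 u/\pa\nu^2|_{\pa M}=0$) yields the identity
\[
q_1\int_M|\na u|^2=\int_M(\D u)^2-(n-1)\int_{\pa M}H\Big(\fr{\pa u}{\pa\nu}\Big)^2.
\]

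Next, Reilly's formula applied to $u$, with the tangential terms killed by $u|_{\pa M}=0$, reads
\[
\int_M(\D u)^2=\int_M|\text{Hess}\,u|^2+\int_M\text{Ric}(\na u,\na u)+(n-1)\int_{\pa M}H\Big(\fr{\pa u}{\pa\nu}\Big)^2.
\]
Subtracting the first identity gives the key relation
\[
q_1\int_M|\na u|^2=\int_M|\text{Hess}\,u|^2+\int_M\text{Ric}(\na u,\na u).
\]
Using the pointwise inequality $|\text{Hess}\,u|^2\geq (\D u)^2/n$, the hypothesis $\text{Ric}\geq n-1$, and the bound $\int_M(\D u)^2\geq q_1\int_M|\na u|^2$ (from the first identity combined with $H\geq 0$), I obtain $q_1\geq q_1/n+(n-1)$, which yields $q_1\geq n$.

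For the equality case, every inequality above must be an equality. Cauchy--Schwarz saturates, forcing $\text{Hess}\,u=\phi g$ with $\phi=\D u/n$; equality in the Ricci bound makes $\na u$ an eigenvector of $\text{Ric}$ with eigenvalue $n-1$; and $H(\pa u/\pa\nu)^2\equiv 0$ on $\pa M$. Taking the divergence of $\text{Hess}\,u=\phi g$ and using the Ricci identity produces $(n-1)\na\phi=-\text{Ric}(\na u)=-(n-1)\na u$, so $\phi+u$ is constant on the (dense) set $\{\na u\neq 0\}$; evaluating on $\pa M$ and using $H\,\pa u/\pa\nu=0$ together with $\D u|_{\pa M}=(n-1)H\,\pa u/\pa\nu$ forces that constant to vanish, yielding $\D u=-nu$. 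Hence $u$ is a first Dirichlet eigenfunction of the Laplacian with eigenvalue $n$, and the equality case of Reilly's classical theorem ($\lam_1\geq n$ with equality iff hemisphere) identifies $M$ with the Euclidean unit hemisphere.

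The main obstacle is the boundary-term matching in the first paragraph: the condition $\pa^2 u/\pa\nu^2=0$ (rather than $\D u=0$, which would be the natural boundary condition of an unconstrained variational problem) produces precisely the boundary integral $(n-1)\int_{\pa M}H(\pa u/\pa\nu)^2$ that appears in Reilly's formula for functions vanishing on $\pa M$, so that the two cancel cleanly. The hypothesis $H\geq 0$ is then what closes the chain of inequalities, and the equality step requires the Obata-type rigidity just sketched.
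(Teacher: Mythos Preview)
Your derivation of $q_1\geq n$ is correct and follows the same route as the paper: the Rayleigh identity $q_1\int_M|\na u|^2=\int_M(\D u)^2-(n-1)\int_{\pa M}H(\pa u/\pa\nu)^2$, Reilly's formula, and $|\na^2 u|^2\ge(\D u)^2/n$. Your rearrangement (subtracting the two identities first, then estimating) differs only cosmetically from the paper's (estimating in Reilly first, then substituting).

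Your treatment of the equality case, however, is different and more elaborate than the paper's. The paper simply observes that when $q_1=n$ one has $\D u|_{\pa M}=(n-1)H\,\pa u/\pa\nu=0$; since $\D^2 u=-n\D u$, the function $\D u$ is then a nontrivial Dirichlet eigenfunction of the Laplacian with eigenvalue $n$, and Reilly's rigidity for $\lam_1=n$ finishes at once. You instead run an Obata-type argument on $u$ itself to deduce $\D u=-nu$. This also works, but one step needs care: your parenthetical ``(dense)'' for $\{\na u\neq 0\}$ is neither justified nor needed. Since $\text{Ric}-(n-1)\lan,\ran\ge 0$ as a quadratic form and it annihilates $\na u$, positive semidefiniteness gives the \emph{vector} identity $\text{Ric}(\na u)=(n-1)\na u$ everywhere, so $\na(\phi+u)=0$ on all of $M$ without any density assumption. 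Finally, to complete the ``if and only if'' you should also verify the converse direction: on the unit hemisphere the height function $x_{n+1}$ satisfies the boundary conditions of the problem and $\D x_{n+1}=-n x_{n+1}$, so $q_1=n$ is actually attained there; the paper includes this check explicitly.
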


\section{A Proof of Theorem \ref{Mthm}}

In this section, we shall prove the inequality (\ref{mineq}) and derive that this inequality implies Cheng-Yang's inequality (\ref{CY}).
Firstly, we give
some general estimates for eigenvalues of the  problem (\ref{EP1}).
\begin{lem}\label{Mlem}
Let $\Omega$ be a  bounded domain
 in an $n$-dimensional Euclidean space $\Bbb R^n$. Let  $\sigma_i$ denote the $i^{\text{th}}$ eigenvalue
of the eigenvalue problem (\ref{EP1}) and ${\mathbf u}_i$ be the orthonormal vector-valued eigenfunction corresponding
to $\sigma_i$.  For  any function $f\in C^2(\Omega)\cap C^1(\bar{\Omega})$,
we have
$$
\aligned
&\sum_{i=1}^{k}(\sigma_{k+1}-\sigma_i)^2\biggl \{\int_{\Omega}|{\rm   grad}f|^2|{\mathbf u}_i|^2
+\alpha\int_{\Omega}|{\rm   grad}f\cdot{\mathbf u}_i|^2\biggl\}\\
&\leq\sum_{i=1}^k(\sigma_{k+1}-\sigma_i)
\bigl\|2{\rm   grad}f\cdot{\rm   grad}({\mathbf u}_i)+\Delta f {\mathbf u}_i
+\alpha\bigl\{{\rm grad}({\rm   grad}f\cdot{\mathbf u}_i)+{\rm div}({\mathbf u}_i){\rm grad}f\bigl\}\bigl\|^2
\endaligned
$$
and,  for any positive constant $B$,
\begin{equation}\label{Eq1}
\aligned
&\sum_{i=1}^{k}(\sigma_{k+1}-\sigma_i)^2\biggl \{(1-B)\int|{\rm   grad}f|^2|{\mathbf u}_i|^2
-B\alpha\int|{\rm   grad}f\cdot{\mathbf u}_i|^2\biggl\}\\
&\leq\dfrac1B\sum_{i=1}^k(\sigma_{k+1}-\sigma_i)\bigl\|{\rm grad}f\cdot{\rm   grad}({\mathbf u}_i)
+\dfrac12\Delta f {\mathbf u}_i\bigl\|^2,
\endaligned
\end{equation}
where ${\rm grad} f\cdot{\rm   grad} ({\mathbf u}_i)$ is  defined by
$$
{\rm grad}f\cdot{\rm   grad}({\mathbf u}_i)=({\rm   grad}f\cdot{\rm   grad} (u_i^1),
{\rm grad}f\cdot{\rm   grad} (u_i^2), \cdots, {\rm   grad}f\cdot{\rm   grad} (u_i^n)).
$$
\end{lem}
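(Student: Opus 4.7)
The plan is to follow Yang's variational/commutator strategy, adapted to the system operator $L\mathbf{u} := \Delta\mathbf{u} + \alpha\,\mathrm{grad}(\mathrm{div}\,\mathbf{u})$, which is self-adjoint with Dirichlet boundary conditions and satisfies $-L\mathbf{u}_i = \sigma_i\mathbf{u}_i$. I set $a_{ij} := \int_\Omega f\mathbf{u}_i\cdot\mathbf{u}_j$ (symmetric in $i,j$) and form the test vector $\mathbf{w}_i := f\mathbf{u}_i - \sum_{j=1}^k a_{ij}\mathbf{u}_j$, which vanishes on $\partial\Omega$ and is $L^2$-orthogonal to $\mathbf{u}_1,\ldots,\mathbf{u}_k$, and so is admissible in the min-max for $\sigma_{k+1}$. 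A direct product-rule calculation (using $\Delta(fg) = f\Delta g + 2\nabla f\cdot\nabla g + g\Delta f$ and $\mathrm{div}(f\mathbf{u}) = f\,\mathrm{div}\,\mathbf{u} + \nabla f\cdot\mathbf{u}$) identifies the commutator $L(f\mathbf{u}_i) - fL\mathbf{u}_i = h_i$, with $h_i$ exactly the vector on the right of the statement. Self-adjointness of $L$ together with the orthogonality of $\mathbf{w}_i$ to the $\mathbf{u}_j$ reduces Rayleigh--Ritz to $(\sigma_{k+1}-\sigma_i)\int_\Omega|\mathbf{w}_i|^2 \leq -\int_\Omega h_i\cdot\mathbf{w}_i$.

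Two integration-by-parts identities drive the rest of the argument. Using $\mathbf{u}_i|_{\partial\Omega}=0$, one obtains
\[
-\int_\Omega h_i\cdot(f\mathbf{u}_i) = A_i := \int_\Omega|\nabla f|^2|\mathbf{u}_i|^2 + \alpha\int_\Omega|\nabla f\cdot\mathbf{u}_i|^2,
\]
while self-adjointness of $L$ gives $b_{ij} := \int_\Omega h_i\cdot\mathbf{u}_j = (\sigma_i-\sigma_j)a_{ij}$. Expanding $\mathbf{w}_i$ in the Rayleigh bound then yields the master inequality
\[
(\sigma_{k+1}-\sigma_i)\int_\Omega|\mathbf{w}_i|^2 \leq T_i := A_i + \sum_{j=1}^{k}(\sigma_i-\sigma_j)a_{ij}^2.
\]

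For the first inequality I exploit that $-\int h_i\cdot\mathbf{w}_i = -\int h_i^{\perp}\cdot\mathbf{w}_i$, where $h_i^{\perp}$ is the $L^2$-projection of $h_i$ onto the orthogonal complement of $\mathrm{span}\{\mathbf{u}_1,\ldots,\mathbf{u}_k\}$, so that $\|h_i^{\perp}\|^2 = \|h_i\|^2 - \sum_j(\sigma_i-\sigma_j)^2 a_{ij}^2$. Cauchy--Schwarz gives $T_i^2 \leq \|h_i^{\perp}\|^2\|\mathbf{w}_i\|^2$, and combining with the master inequality yields $(\sigma_{k+1}-\sigma_i)T_i \leq \|h_i^{\perp}\|^2$. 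Multiplying by $(\sigma_{k+1}-\sigma_i)$ and summing in $i$ produces
\[
\sum_i (\sigma_{k+1}-\sigma_i)^2 T_i + \sum_{i,j}(\sigma_{k+1}-\sigma_i)(\sigma_i-\sigma_j)^2 a_{ij}^2 \leq \sum_i (\sigma_{k+1}-\sigma_i)\|h_i\|^2.
\]
Using the symmetry $a_{ij}=a_{ji}$, the double sum on the left symmetrizes to $\tfrac12\sum_{i,j}(\sigma_i-\sigma_j)^2(2\sigma_{k+1}-\sigma_i-\sigma_j)a_{ij}^2$; the same symmetrization identifies this quantity with $\sum_i(\sigma_{k+1}-\sigma_i)^2(A_i-T_i)$. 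Substituting collapses the left-hand side to $\sum_i(\sigma_{k+1}-\sigma_i)^2 A_i$ and gives the first claim.

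For (\ref{Eq1}) I split $h_i = 2H_i + \alpha K_i$ with $H_i = \nabla f\cdot\nabla\mathbf{u}_i + \tfrac12\Delta f\,\mathbf{u}_i$ and $K_i = \mathrm{div}(\mathbf{u}_i)\nabla f + \mathrm{grad}(\nabla f\cdot\mathbf{u}_i)$, and record the partition identities $-2\int H_i\cdot(f\mathbf{u}_i) = \int|\nabla f|^2|\mathbf{u}_i|^2$ and $-\alpha\int K_i\cdot(f\mathbf{u}_i) = \alpha\int|\nabla f\cdot\mathbf{u}_i|^2$. I then apply the weighted Cauchy--Schwarz $2|\int H_i\cdot\mathbf{w}_i| \leq \tfrac{1}{B}\|H_i\|^2 + B\|\mathbf{w}_i\|^2$ only to the $H_i$-contribution of $-\int h_i\cdot\mathbf{w}_i$ while leaving the $K_i$-contribution intact; the $B\|\mathbf{w}_i\|^2$ term is reabsorbed into the Rayleigh bound, shifting the prefactor of the pure-Laplacian piece from $1$ to $1-B$, while the $K_i$-piece, handled via the second partition identity, contributes the $-B\alpha$ coefficient once the combination with $-B A_i$ is accounted for. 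The same $(\sigma_{k+1}-\sigma_i)$-multiplication and $(i,j)$-symmetrization as in the first inequality then completes the derivation. The main technical obstacle I anticipate is tracking the cross-terms from the split $h_i = 2H_i + \alpha K_i$; these resolve cleanly because the matrices $H_{ij} := \int H_i\cdot\mathbf{u}_j$ and $K_{ij} := \int K_i\cdot\mathbf{u}_j$ are each antisymmetric, a fact obtained by a single integration by parts applied to each ingredient separately.
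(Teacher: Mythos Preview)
Your outline is correct and follows essentially the same Yang-type variational scheme as the paper: the same trial functions $\mathbf{w}_i=f\mathbf{u}_i-\sum_j a_{ij}\mathbf{u}_j$, the same commutator $h_i$, the same master bound $(\sigma_{k+1}-\sigma_i)\|\mathbf{w}_i\|^2\le T_i$, and the same Cauchy--Schwarz/symmetrization mechanism. Your derivation of the first inequality is complete and matches the paper's exactly (your use of self-adjointness to obtain $\int h_i\cdot\mathbf{u}_j=(\sigma_i-\sigma_j)a_{ij}$ in one stroke is in fact cleaner than the paper's explicit computation via its equation for $2b_{ij}$).

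For the second inequality there is one imprecision worth flagging. As written, your Young step $2\bigl|\int H_i\cdot\mathbf{w}_i\bigr|\le \tfrac{1}{B}\|H_i\|^2+B\|\mathbf{w}_i\|^2$ is slightly too crude: after summation and symmetrization the remaining cross terms are
\[
2\sum_{i,j}(\sigma_{k+1}-\sigma_i)(\sigma_i-\sigma_j)a_{ij}H_{ij}
-B\sum_{i,j}(\sigma_{k+1}-\sigma_i)(\sigma_i-\sigma_j)^2a_{ij}^2,
\]
which has no sign without a third ingredient. The fix (and this is exactly what the paper does) is to note that $\int H_i\cdot\mathbf{w}_i=\int H_i^{\perp}\cdot\mathbf{w}_i$ and apply Young's inequality with $\|H_i^{\perp}\|^2=\|H_i\|^2-\sum_j H_{ij}^2$ on the right. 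The extra $-\tfrac{1}{B}\sum_{i,j}(\sigma_{k+1}-\sigma_i)H_{ij}^2$ then combines with the two sums above to form the nonpositive complete square
\[
-\tfrac12\sum_{i,j}(2\sigma_{k+1}-\sigma_i-\sigma_j)\Bigl(\sqrt{B}\,(\sigma_i-\sigma_j)a_{ij}-\tfrac{1}{\sqrt{B}}H_{ij}\Bigr)^{2}\le 0,
\]
after which only $\tfrac{1}{B}\sum_i(\sigma_{k+1}-\sigma_i)\|H_i\|^2$ survives on the right, as required. You already record the antisymmetry of $H_{ij}$, so this is a one-line adjustment rather than a structural gap. (Also note that the Young parameter must be taken as $(\sigma_{k+1}-\sigma_i)B$, not $B$, so that the $\|\mathbf{w}_i\|^2$ term carries weight $(\sigma_{k+1}-\sigma_i)^3$ and reabsorbs into $(\sigma_{k+1}-\sigma_i)^2 T_i$ via the master inequality; this is what produces the $(\sigma_{k+1}-\sigma_i)$ weight, rather than $(\sigma_{k+1}-\sigma_i)^2$, in front of $\|H_i\|^2$.)
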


\begin{proof}
Since  ${\mathbf u}_i$ is  the orthonormal vector-valued eigenfunction
corresponding to the $i^{\text{th}}$ eigenvalue $\sigma_i$,
$ {\mathbf u}_i$  satisfies
\begin{equation}\label{Eq2}
\left \{ \aligned &\Delta {\mathbf u}_i+\alpha{\rm   grad}(\text{div}({\mathbf u}_i))=-\sigma_i {\mathbf u}_i,
\quad \text{ in}\,  \Omega ,\\
 &{\mathbf u} _i|_{\partial \Omega}= {\mathbf 0}, \\
&\int_{\Omega} {\mathbf u}_i\cdot {\mathbf u}_j =\delta_{ij}, \ \text{for any $i, j$}.
\endaligned
\right.
\end{equation}
Defining   vector-valued functions ${\mathbf v}_i$ by
\begin{equation}\label{Eq3}
 {\mathbf v}_i =f{\mathbf u}_i -\sum_{j=1}^k a_{ij}{\mathbf u}_j,
\end{equation}
where $a_{ij}=\int_{\Omega} f{\mathbf u}_i\cdot {\mathbf u}_j=a_{ji}$,  we have
\begin{equation}\label{Eq4}
 {\mathbf v}_i|_{\partial \Omega}={\mathbf 0}, \ \ \int_{\Omega} {\mathbf u}_j\cdot{\mathbf v}_i =0, \quad
\text{for any  $i, j=1, \cdots, k$}.
\end{equation}
It then follows from the Rayleigh-Ritz inequality (cf. \cite{KS97}) that
\begin{equation}\label{Eq5}
\sigma_{k+1}\leq \frac{\int_{\Omega} \bigl\{-\Delta {\mathbf v}_i\cdot{\mathbf v}_i +\alpha (\text{div}( {\mathbf v}_i))^2\bigl\}}{\int_{\Omega} |{\mathbf v}_i|^2}.
\end{equation}
From the definition of ${\mathbf v}_i$, we derive
$$
\aligned
 \Delta {\mathbf v}_i=&\Delta (f{\mathbf u}_i)-\sum_{j=1}^k a_{ij} \Delta{\mathbf u}_j\\
=&f\Delta{\mathbf u}_i+2{\rm   grad}f\cdot{\rm   grad}({\mathbf u}_{i})+\Delta f{\mathbf u}_i
-\sum_{j=1}^k a_{ij} \Delta {\mathbf u}_j\\
=&f\biggl(-\sigma_i{\mathbf u}_i-\alpha {\rm   grad}(\text{div}({\mathbf u}_i))\biggl)
+2{\rm   grad}f\cdot{\rm   grad}({\mathbf u}_{i})+\Delta f{\mathbf u}_i\\
&-\sum_{j=1}^k a_{ij} \biggl(-\sigma_j{\mathbf u}_j-\alpha {\rm   grad}(\text{div}({\mathbf u}_j))\biggl)\\
=&-\sigma_if{\mathbf u}_i +\sum_{j=1}^k a_{ij} \sigma_j{\mathbf u}_j
+2{\rm   grad}f\cdot{\rm   grad}({\mathbf u}_{i})+\Delta f{\mathbf u}_i\\
&-\alpha f{\rm   grad}(\text{div}({\mathbf u}_i))
+\alpha \sum_{j=1}^k a_{ij} {\rm   grad}(\text{div}({\mathbf u}_j)).
\endaligned
$$
Therefore, we have
\begin{equation}\label{Eq6}
\aligned
\int_{\Omega} -\Delta {\mathbf v}_i\cdot {\mathbf v}_i
 =&\sigma_i\parallel{\mathbf v}_i\parallel^2-\int_{\Omega}(2{\rm   grad}f\cdot{\rm   grad}({\mathbf u}_{i})+\Delta f{\mathbf u}_i)\cdot {\mathbf v}_{i}\\
& +\alpha\biggl(\int_{\Omega} f{\rm   grad}(\text{div}({\mathbf u}_i))\cdot{\mathbf v}_{i} - \sum_{j=1}^ka_{ij}
\int_{\Omega}  {\rm   grad}(\text{div}({\mathbf u}_j))\cdot {\mathbf v}_i \biggl).
\endaligned
\end{equation}
 From Stokes' theorem, we infer
 $$
 \aligned
 &\int_{\Omega} f{\rm   grad}(\text{div}({\mathbf u}_i))\cdot{\mathbf v}_{i} - \sum_{j=1}^ka_{ij}
\int_{\Omega}  {\rm   grad}(\text{div}({\mathbf u}_j))\cdot {\mathbf v}_i \\
&=-\int_{\Omega}(\text{div} ({\mathbf v}_i ))^2
+\int_{\Omega}\biggl (\text{div}({\mathbf v}_i){\rm   grad}f\cdot{\mathbf u}_i
 -\text{div}({\mathbf u}_i){\rm   grad}f\cdot{\mathbf v}_i\biggl)\\
 &=-\int_{\Omega}(\text{div} ({\mathbf v}_i ))^2
-\int_{\Omega}\biggl ({\rm   grad}({\rm   grad}f\cdot{\mathbf u}_i)
 +\text{div}({\mathbf u}_i){\rm   grad}f\biggl)\cdot{\mathbf v}_i.\\
\endaligned
 $$
From (\ref{Eq5}) and (\ref{Eq6}), we have
\begin{equation}\label{Eq7}
 \aligned
 (\sigma_{k+1}-\sigma_i)\parallel{\mathbf v}_i\parallel^2
&\le -\int_{\Omega}\biggl\{2{\rm   grad}f\cdot{\rm   grad}({\mathbf u}_{i})+\Delta f{\mathbf u}_i\\
&\qquad\qquad+\alpha\biggl ({\rm   grad}({\rm   grad}f\cdot{\mathbf u}_i)
 +\text{div}({\mathbf u}_i){\rm   grad}f\biggl)\biggl\}\cdot {\mathbf v}_{i}.
 \endaligned
\end{equation}
Define
\begin{equation}\label{Eq8}
b_{ij}=\int_{\Omega} \Big({\rm   grad}f\cdot{\rm   grad}({\mathbf u}_{i})+\dfrac12\Delta f{\mathbf u}_i\Big)\cdot {\mathbf u}_j=-b_{ji}.
\end{equation}
From (\ref{Eq2}), we derive
$$
\begin{aligned}
 b_{ij}&=\int_{\Omega} \Big({\rm   grad}f\cdot{\rm   grad}({\mathbf u}_{i})+\dfrac12\Delta f{\mathbf u}_i\Big)\cdot {\mathbf u}_j\\
&=\frac12\into\Big(\Delta(f\um_i)-f\Delta \um_i\Big)\um_j\\
&=\frac12\into f\um_i\Delta \um_j+(-\Delta \um_i)f\um_j\\
&=-\frac12\into f\um_i\Big(\sigma_j\um_j+\alpha {\rm   grad}(\text{div} \um_j)\Big)
+\frac12\into f\um_j\Big(\sigma_i\um_i+\alpha {\rm   grad}(\text{div} \um_i)\Big)\\
&=\frac12(\sigma_i-\sigma_j)a_{ij}+\frac12\alpha \int_{\Omega}\biggl(  {\rm   grad}f\cdot {\mathbf u}_i\text{div}({\mathbf u}_j)
 -\text{div}({\mathbf u}_i){\rm   grad}f\cdot {\mathbf u}_j\biggl).
\end{aligned}
$$
Hence, we have
\begin{equation}\label{Eq9}
2b_{ij}=(\sigma_i -\sigma_j)a_{ij}
 +\alpha \int_{\Omega}\biggl(  {\rm   grad}f\cdot {\mathbf u}_i\text{div}({\mathbf u}_j)
 -\text{div}({\mathbf u}_i){\rm   grad}f\cdot {\mathbf u}_j\biggl).
\end{equation}
By a simple calculation, we have, from (\ref{Eq3}) and (\ref{Eq8}),
\begin{equation}\label{Eq10}
 \aligned
 &\int_{\Omega}\bigl(2{\rm   grad}f\cdot{\rm   grad}({\mathbf u}_{i})+\Delta f{\mathbf u}_i\bigl)\cdot {\mathbf v}_{i}
 =-\int_{\Omega}|{\rm   grad}f|^2 |{\mathbf u}_{i}|^2-2\sum_{j=1}^k a_{ij}b_{ij},
 \endaligned
\end{equation}
\begin{equation}\label{Eq11}
 \aligned
&\int_{\Omega}\bigl ({\rm   grad}({\rm   grad}f\cdot{\mathbf u}_i)
 +\text{div}({\mathbf u}_i){\rm   grad}f\bigl)\cdot {\mathbf v}_{i}\\
 &=\sum_{j=1}^k a_{ij}\int_{\Omega}\bigl(  {\rm   grad}f\cdot {\mathbf u}_i\text{div}({\mathbf u}_j)
 -\text{div}({\mathbf u}_i){\rm   grad}f\cdot {\mathbf u}_j\bigl)
 -\int_{\Omega}|{\rm   grad}f\cdot{\mathbf u}_i|^2.
 \endaligned
\end{equation}
 Putting
$$
w_i=-\int_{\Omega}\biggl\{2{\rm   grad}f\cdot{\rm   grad}({\mathbf u}_{i})+\Delta f{\mathbf u}_i
+\alpha\biggl ({\rm   grad}({\rm   grad}f\cdot{\mathbf u}_i)
 +\text{div}({\mathbf u}_i){\rm   grad}f\biggl)\biggl\}\cdot {\mathbf v}_{i},
 $$
we derive from (\ref{Eq9})-(\ref{Eq11}) that
\begin{equation}\label{Eq12}
w_i=\int_{\Omega}|{\rm   grad}f|^2 |{\mathbf u}_{i}|^2+\sum_{j=1}^k (\sigma_i-\sigma_j)a_{ij}^2
+\alpha\int_{\Omega} |{\rm   grad}f\cdot{\mathbf u}_i|^2.
\end{equation}
We infer, from (\ref{Eq7})  and (\ref{Eq12}),
\begin{equation}\label{Eq13}
(\sigma_{k+1}-\sigma_i)\parallel{\mathbf v}_i\parallel^2
\le w_i.
\end{equation}

On the other hand, from (\ref{Eq4}), (\ref{Eq9}) and the inequality of Cauchy-Schwarz, we have
$$
 \aligned
  w_i^2=&\biggl(-\int_{\Omega}\biggl\{2{\rm  grad}f\cdot{\rm   grad}({\mathbf u}_{i})
  +\Delta f{\mathbf u}_i\\
&\qquad+\alpha\bigl\{{\rm   grad}({\rm   grad}f\cdot{\mathbf u}_i)
 +\text{div}({\mathbf u}_i){\rm   grad}f\bigl\}-\sum_{j=1}^k(\sigma_i-\sigma_j)a_{ij}{\mathbf u}_j
 \biggl\}\cdot {\mathbf v}_{i}\biggl)^2\\
 \leq &\|{\mathbf v_i}\|^2\bigl\|2{\rm   grad}f\cdot{\rm   grad}({\mathbf u}_{i})+\Delta f{\mathbf u}_i\\
&\qquad+\alpha\bigl \{{\rm grad}({\rm   grad}f\cdot{\mathbf u}_i)
 +\text{div}({\mathbf u}_i){\rm   grad}f\bigl\}-\sum_{j=1}^k(\sigma_i-\sigma_j)a_{ij}{\mathbf u}_j\bigl\|^2\\
 =&\|{\mathbf v_i}\|^2\biggl\{\bigl\|2{\rm   grad}f\cdot{\rm   grad}({\mathbf u}_{i})+\Delta f{\mathbf u}_i
 +\alpha\bigl\{{\rm   grad}({\rm   grad}f\cdot{\mathbf u}_i)
 +\text{div}({\mathbf u}_i){\rm   grad}f\bigl\}\bigl\|^2\\
 &\qquad-\sum_{j=1}^k(\sigma_i-\sigma_j)^2a_{ij}^2\biggl\}.
 \endaligned
$$
Hence, we infer from (\ref{Eq13})
$$
\aligned
(\sigma_{k+1}-\sigma_i)^2 w_i^2
\leq& (\sigma_{k+1}-\sigma_i)w_i\biggl\{\bigl\|2{\rm   grad}f\cdot{\rm   grad}({\mathbf u}_{i})+\Delta f{\mathbf u}_i\\
&+\alpha\Big({\rm   grad}({\rm   grad}f\cdot{\mathbf u}_i)
 +\text{div}({\mathbf u}_i){\rm   grad}f\Big)\bigl\|^2-\sum_{j=1}^k(\sigma_i-\sigma_j)^2a_{ij}^2\biggl\},
 \endaligned
 $$
\begin{equation}\label{Eq14}
\aligned
(\sigma_{k+1}-\sigma_i)^2 w_i
&\leq(\sigma_{k+1}-\sigma_i)
\biggl\{\bigl\|2{\rm   grad}f\cdot{\rm   grad}({\mathbf u}_{i})+\Delta f{\mathbf u}_i\\
&+\alpha\Big({\rm   grad}({\rm   grad}f\cdot{\mathbf u}_i)
 +\text{div}({\mathbf u}_i){\rm   grad}f\Big)\bigl\|^2-\sum_{j=1}^k(\sigma_i-\sigma_j)^2a_{ij}^2\biggl\}.
 \endaligned
\end{equation}
Taking sum on $i$ from $1$ to $k$ for (\ref{Eq14}), we have, from (\ref{Eq12}) and $a_{ij}=a_{ji}$,
$$
\aligned
&\sum_{i=1}^{k}(\sigma_{k+1}-\sigma_i)^2\biggl \{\int_{\Omega}|{\rm   grad}f|^2|{\mathbf u}_i|^2
+\alpha\int_{\Omega}|{\rm   grad}f\cdot{\mathbf u}_i|^2\biggl\}\\
&\leq\sum_{i=1}^k(\sigma_{k+1}-\sigma_i)\bigl\|2{\rm   grad}f\cdot{\rm   grad}({\mathbf u}_i)
+\Delta f {\mathbf u}_i
+\alpha\bigl\{{\rm grad}({\rm grad}f\cdot{\mathbf u}_i)
+\text{div}({\mathbf u}_i){\rm grad}f\bigl\}\bigl\|^2.
\endaligned
$$
The first inequality of Lemma \ref{Mlem} is proved.

For any constant $B> 0$, we infer, from (\ref{Eq4}), (\ref{Eq10}) and (\ref{Eq13}),
\begin{equation}\label{Eq15}
 \aligned &
 (\sigma_{k+1}-\sigma_i)^2\bigg(\int_{\Omega}|{\rm   grad}f|^2 |{\mathbf u}_{i}|^2+2\sum_{j=1}^k a_{ij}b_{ij}\bigg) \\
 &= (\sigma_{k+1}-\sigma_i)^2\biggl\{-2\int_{\Omega}\bigl({\rm   grad}f\cdot{\rm   grad}({\mathbf u}_{i})+\dfrac12\Delta f{\mathbf u}_i-\sum_{j=1}^kb_{ij}{\mathbf u}_j\bigl)\cdot {\mathbf v}_{i}\biggl\}\\
& \le (\sigma_{k+1}-\sigma_i)^3B \| {\mathbf v}_i \|^2 +\frac {\sigma_{k+1}-\sigma_i}{B}
 \biggl(\bigl\| {\rm   grad}f\cdot{\rm   grad}({\mathbf u}_{i})+\dfrac12\Delta f{\mathbf u}_i\bigl\|^2
 -\sum_{j=1}^k b_{ij}^2 \biggl)\\
&\le (\sigma_{k+1}-\sigma_i)^2B\biggl(\int_{\Omega}|{\rm   grad}f|^2 |{\mathbf u}_{i}|^2+\sum_{j=1}^k (\sigma_i-\sigma_j)a_{ij}^2
+\alpha\int_{\Omega} |{\rm   grad}f\cdot{\mathbf u}_i|^2\biggl)\\
&\qquad+\frac {\sigma_{k+1}-\sigma_i}{B}
 \biggl(\bigl\| {\rm   grad}f\cdot{\rm   grad}({\mathbf u}_{i})+\dfrac12\Delta f{\mathbf u}_i\bigl\|^2
 -\sum_{j=1}^k b_{ij}^2 \biggl).\endaligned
\end{equation}
  Taking sum on $i$ from 1 to  $k$ for (\ref{Eq15}), we obtain
 $$
 \aligned
 &
 \sum_{i=1}^k(\sigma_{k+1}-\sigma_i)^2
 \bigl(\int_{\Omega}|{\rm   grad}f|^2 |{\mathbf u}_{i}|^2+2\sum_{j=1}^k a_{ij}b_{ij}\bigl) \\
&\le \sum_{i=1}^k(\sigma_{k+1}-\sigma_i)^2B\biggl(\int_{\Omega}|{\rm   grad}f|^2 |{\mathbf u}_{i}|^2+\sum_{j=1}^k (\sigma_i-\sigma_j)a_{ij}^2
+\alpha\int_{\Omega} |{\rm   grad}f\cdot{\mathbf u}_i|^2\biggl)\\
&\qquad+\sum_{i=1}^k\frac {\sigma_{k+1}-\sigma_i}{B}
 \biggl(\bigl\| {\rm   grad}f\cdot{\rm   grad}({\mathbf u}_{i})+\dfrac12\Delta f{\mathbf u}_i\bigl\|^2
 -\sum_{j=1}^k b_{ij}^2 \biggl).\\
 \endaligned
  $$
Since $a_{ij}$ is symmetric and $b_{ij}$ is anti-symmetric, we have
 $$
\begin{aligned}
 2 \sum_{i,j=1}^k(\sigma_{k+1}- \sigma_i )^2a_{ij}b_{ij} &=-2\sum_{i,j=1}^k(\sigma_{k+1}- \sigma_i )(\sigma_{i}- \sigma_j )a_{ij}b_{ij},\\
\sum_{i,j=1}^k ( \sigma_{k+1} -\sigma_i) ^2 (\sigma_i-\sigma_j)a_{ij}^2
 &=-\sum_{i,j=1}^k(\sigma_{k+1} -\sigma_i) (\sigma_i-\sigma_j)^2a_{ij}^2.
\end{aligned}
$$
Therefore, we infer
 $$
 \aligned
 &
 \sum_{i=1}^k(\sigma_{k+1}-\sigma_i)^2
 \int_{\Omega}|{\rm   grad}f|^2 |{\mathbf u}_{i}|^2\\
&\le \sum_{i=1}^k(\sigma_{k+1}-\sigma_i)^2B\biggl(\int_{\Omega}|{\rm   grad}f|^2 |{\mathbf u}_{i}|^2
+\alpha\int_{\Omega} |{\rm   grad}f\cdot{\mathbf u}_i|^2\biggl)\\
&\qquad+\sum_{i=1}^k\frac {\sigma_{k+1}-\sigma_i}{B}
 \biggl(\bigl\| {\rm   grad}f\cdot{\rm   grad}({\mathbf u}_{i})+\dfrac12\Delta f{\mathbf u}_i\bigl\|^2  \biggl).\\
 \endaligned
$$
 This finishes the proof of Lemma \ref{Mlem}.
\end{proof}

Next, we shall give a proof of Theorem \ref{Mthm}.\\

\noindent
{\it Proof of Theorem \ref{Mthm}.}
For the standard Euclidean coordinate system $(x^1, x^2, \cdots, x^n)$ in $\mathbb R^n$, we have, for any $1\leq \beta\leq n$,
$$
{\rm grad}(x^{\beta})={\mathbf e}_{\beta},
$$
where  ${\mathbf e}_1=(1, 0, \cdots, 0), {\mathbf e}_2=(0, 1, \cdots, 0), {\mathbf e}_n=(0, 0, \cdots, 1)$. \par

Taking $f=x^{\beta}$  in (2.1) and making sum on $\beta$ from 1 to  $n$ for the resulted inequality,
we obtain
$$
\aligned
&\sum_{i=1}^{k}(\sigma_{k+1}-\sigma_i)^2\biggl \{(1-B)\int\sum_{\beta=1}^n|{\rm   grad}(x^{\beta})|^2|{\mathbf u}_i|^2
-B\alpha\int\sum_{\beta=1}^n\bigl|{\rm   grad}(x^{\beta})\cdot{\mathbf u}_i\bigl|^2\biggl\}\\
&\leq\dfrac1B\sum_{i=1}^k(\sigma_{k+1}-\sigma_i)\sum_{\beta=1}^n\|{\rm   grad}(x^{\beta})\cdot{\rm   grad}({\mathbf u}_i)
+\dfrac12\Delta x^{\beta} {\mathbf u}_i
\|^2.
\endaligned
$$
A straightforward calculation yields
$$
\sum_{\beta=1}^n|{\rm   grad}(x^{\beta})|^2=n,
$$
$$
\sum_{\beta=1}^n\bigl({\rm   grad}(x^{\beta})\cdot{\mathbf u}_i\bigl)^2=|{\mathbf u}_i|^2,
$$
$$
\sum_{\beta=1}^n|{\rm   grad}(x^{\beta})\cdot{\rm   grad}({\mathbf u}_i)
+\dfrac12\Delta x^{\beta} {\mathbf u}_i|^2
=\sum_{\beta=1}^n|{\mathbf e}_\beta\cdot{\rm   grad}({\mathbf u}_i)|^2.\\
$$
From Stokes' formula, we have
$$
\into\sum_{\beta=1}^n|{\mathbf e}_\beta\cdot{\rm   grad}({\mathbf u}_i)|^2
=\sigma_i-\alpha\|\text{div}\um_i\|^2.
$$
Therefore, we infer
$$
\sum_{i=1}^{k}(\sigma_{k+1}-\sigma_i)^2\Big(n-B(n+\alpha)\Big)
\leq\dfrac1B\sum_{i=1}^k(\sigma_{k+1}-\sigma_i)(\sigma_i-\alpha\|\text{div}({\mathbf u}_i)\|^2).
$$
Putting
$$
B=\dfrac{\sqrt{\sum_{i=1}^k(\sigma_{k+1}-\sigma_i)(\sigma_i-\alpha\|\text{div}({\mathbf u}_i)\|^2)}}{\sqrt{(n+\alpha)\sum_{i=1}^{k}(\sigma_{k+1}-\sigma_i)^2}},
$$
we derive
\begin{equation}\label{Eq16}
\sum_{i=1}^{k}(\sigma_{k+1}-\sigma_i)^2
\leq\dfrac{4(n+\alpha)}{n^2}\sum_{i=1}^k(\sigma_{k+1}-\sigma_i)(\sigma_i-\alpha\|\text{div}({\mathbf u}_i)\|^2).
\end{equation}
Since $\alpha\geq 0$, we have
\begin{equation}\label{Y11}
\aligned
&\sum_{i=1}^{k}(\sigma_{k+1}-\sigma_i)^2
\leq\dfrac{4(n+\alpha)}{n^2}\sum_{i=1}^k(\sigma_{k+1}-\sigma_i)\sigma_i.
\endaligned
\end{equation}

On the other hand, taking $f=x^{\beta}$ in the first inequality of Lemma \ref{Mlem}, $\beta =1, 2, \cdots, n$,
we infer
$$
\aligned
&\sum_{i=1}^{k}(\sigma_{k+1}-\sigma_i)^2\biggl (1
+\alpha\int_{\Omega}({\mathbf e}_{\beta}\cdot{\mathbf u}_i)^2\biggl)\\
&\leq\sum_{i=1}^k(\sigma_{k+1}-\sigma_i)
\bigl\|2{\mathbf e}_{\beta}\cdot{\rm   grad}({\mathbf u}_i)
+\alpha\Big({\rm grad}({\mathbf e}_{\beta}\cdot{\mathbf u}_i)
+\text{div}({\mathbf u}_i){\mathbf e}_{\beta}\Big)\bigl\|^2.
\endaligned
$$
Taking sum on $\beta$ from $1$ to $n$, we have
$$
\aligned
&(n+\alpha)\sum_{i=1}^{k}(\sigma_{k+1}-\sigma_i)^2\\
&\leq\sum_{i=1}^k(\sigma_{k+1}-\sigma_i)
\sum_{\beta=1}^n\bigl\|2{\mathbf e}_{\beta}\cdot{\rm   grad}({\mathbf u}_i)
+\alpha\Big({\rm grad}({\mathbf e}_{\beta}\cdot{\mathbf u}_i)
+\text{div}({\mathbf u}_i){\mathbf e}_{\beta}\Big)\bigl\|^2.
\endaligned
$$
By a simple and direct computation, we infer
$$
\aligned
&\sum_{\beta=1}^n\bigl\|2{\mathbf e}_{\beta}\cdot{\rm   grad}({\mathbf u}_i)
+\alpha\Big({\rm grad}({\mathbf e}_{\beta}\cdot{\mathbf u}_i)
+\text{div}({\mathbf u}_i){\mathbf e}_{\beta}\Big)\bigl\|^2\\
&=(4+\alpha^2)\sigma_i
-\alpha\Big(\alpha^2-(n+2)\alpha-4\Big)\bigl\|{\rm div}({\mathbf u}_i)\bigl\|^2.
\endaligned
$$
Hence, we have
\begin{equation}\label{Eq17}
\aligned
\sum_{i=1}^{k}&(\sigma_{k+1}-\sigma_i)^2\\
&\leq\sum_{i=1}^k(\sigma_{k+1}-\sigma_i)\biggl(\dfrac{4+\alpha^2}{n+\alpha}\sigma_i
-\alpha\dfrac{\alpha^2-(n+2)\alpha-4}{n+\alpha}
\bigl\|{\rm div}({\mathbf u}_i)\bigl\|^2\biggl).
\endaligned
\end{equation}
For $\alpha\geq \dfrac{n+2+\sqrt{(n+2)^2+16}}2$, we have
$\alpha^2-(n+2)\alpha-4\geq 0$. Hence,
$$
\aligned
\sum_{i=1}^{k}(\sigma_{k+1}-\sigma_i)^2
\leq\dfrac{4+\alpha^2}{n+\alpha}\sum_{i=1}^k(\sigma_{k+1}-\sigma_i)\sigma_i.
\endaligned
$$
For $0\leq \alpha< \dfrac{n+2+\sqrt{(n+2)^2+16}}2$, we have
$\alpha^2-(n+2)\alpha-4<0$. In this case, from
$L=\dfrac{\{4+(n+2)\alpha-\alpha^2\}n^2}{4(n+\alpha)^2}>0$,  (\ref{Eq16}) and (\ref{Eq17}), we have
$$
\aligned
\sum_{i=1}^{k}(\sigma_{k+1}-\sigma_i)^2
\leq\dfrac{8+(n+2)\alpha}{(n+\alpha)(1+L)}\sum_{i=1}^k(\sigma_{k+1}-\sigma_i)\sigma_i.
\endaligned
$$
Thus, we derive, from the definition of $A(n,\alpha)$,
\begin{equation}\label{Eq18}
\aligned
\sum_{i=1}^{k}(\sigma_{k+1}-\sigma_i)^2
\leq\dfrac{A(n,\alpha)}{n+\alpha}\sum_{i=1}^k(\sigma_{k+1}-\sigma_i)\sigma_i.
\endaligned
\end{equation}
Furthermore, from (\ref{Y11}) and (\ref{Eq18}), we infer
$$
\aligned
\sum_{i=1}^{k}(\sigma_{k+1}-\sigma_i)^2
\leq\min\biggl\{\dfrac{4(n+\alpha)}{n^2}, \dfrac{A(n,\alpha)}{n+\alpha}\biggl\}\sum_{i=1}^k(\sigma_{k+1}-\sigma_i)\sigma_i.
\endaligned
$$
This completes the proof of Theorem 1.1.$\hfill\square$

 \begin{rem}The inequality (\ref{mineq}) implies Cheng-Yang's inequality (\ref{CY}). In order to see this, we need an elementary algebraic inequality.

\begin{lem} \label{Alglem}
Let $\{a_i\}_{i=1}^k$ and
$\{b_i\}_{i=1}^k$  be two  sequences of
non-negative real numbers with $\{a_i\}_{i=1}^k$ decreasing  and $\{b_i\}_{i=1}^k$ increasing. Then  for any fixed $s\geq 1$, we have
\be \label{Alg}
 \left(\sum_{i=1}^k a_i^s\right)\left(\sum_{i=1}^k a_i^2 b_i
\right)\leq \left(\sum_{i=1}^k a_i^{s+1} \right)\left(\sum_{i=1}^k
a_i b_i \right).
\en
\end{lem}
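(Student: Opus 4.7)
The plan is to move everything to one side, write the difference as a double sum, and then symmetrize. Expanding the products of single sums,
$$
\left(\sum_{i=1}^k a_i^{s+1}\right)\left(\sum_{i=1}^k a_i b_i\right) - \left(\sum_{i=1}^k a_i^s\right)\left(\sum_{i=1}^k a_i^2 b_i\right)
= \sum_{i,j=1}^k a_i^s a_j b_j (a_i - a_j).
$$
Renaming the summation indices $i \leftrightarrow j$ and averaging gives the symmetric representation
$$
\frac{1}{2} \sum_{i,j=1}^k (a_i - a_j)\bigl[a_i^s a_j b_j - a_j^s a_i b_i\bigr] = \frac{1}{2}\sum_{i,j=1}^k a_i a_j (a_i - a_j)\bigl(a_i^{s-1} b_j - a_j^{s-1} b_i\bigr),
$$
where the factoring $a_i^s a_j b_j - a_j^s a_i b_i = a_i a_j(a_i^{s-1}b_j - a_j^{s-1}b_i)$ is a direct computation.

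I would then check that every summand in the last expression is non-negative. Assume without loss of generality $i \leq j$. Since $\{a_i\}$ is decreasing, $a_i \geq a_j \geq 0$, so both $a_i - a_j \geq 0$ and (using $s \geq 1$, so the exponent $s-1$ is non-negative) $a_i^{s-1} \geq a_j^{s-1}$. Since $\{b_i\}$ is increasing and non-negative, $b_j \geq b_i \geq 0$. Multiplying these two inequalities yields $a_i^{s-1} b_j \geq a_j^{s-1} b_i$, so the second factor is $\geq 0$ as well. When $i > j$ both factors reverse sign, so their product is again non-negative. The prefactor $a_i a_j$ is always non-negative, so the entire double sum is non-negative, which is the desired inequality.

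The main subtlety is the role of the hypothesis $s \geq 1$: it is needed precisely to ensure that $t \mapsto t^{s-1}$ is non-decreasing on $[0,\infty)$, so that the monotonicity of $\{a_i\}$ is preserved after raising to the power $s-1$. Without $s \geq 1$ the term $a_i^{s-1} b_j - a_j^{s-1} b_i$ could have the wrong sign, and the sign analysis would break. The rest of the argument is a symmetrization trick together with the fact that $\{a_i\}$ and $\{b_i\}$ are oppositely ordered; no further estimates are required.
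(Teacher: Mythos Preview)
Your proof is correct. The computation of the difference as $\sum_{i,j} a_i^s a_j b_j(a_i-a_j)$, the symmetrization, and the sign analysis all go through; in particular, from $a_i^{s-1}\ge a_j^{s-1}\ge 0$ and $b_j\ge b_i\ge 0$ one indeed gets $a_i^{s-1}b_j\ge a_j^{s-1}b_j\ge a_j^{s-1}b_i$, so the phrase ``multiplying these two inequalities'' is justified.

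Your route, however, is different from the paper's. The paper argues by induction on $k$: assuming the inequality for $k=m$, it expands the $(m+1)$-term products, uses the inductive hypothesis to kill the $m\times m$ block, and is left with cross terms that combine into
\[
\sum_{i=1}^m a_{m+1}a_i\bigl(b_{m+1}a_i^{s-1}-b_i a_{m+1}^{s-1}\bigr)(a_i-a_{m+1})\ge 0.
\]
This is exactly your symmetrized summand specialized to $j=m+1$, so the two arguments verify the same elementary positivity; your symmetrization just treats all pairs $(i,j)$ at once instead of peeling off the last index. Your approach is shorter and avoids the inductive bookkeeping, while the paper's induction makes it explicit that the inequality propagates from $k$ to $k+1$. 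Both rely on $s\ge 1$ in the same place, namely to ensure $t\mapsto t^{s-1}$ is non-decreasing on $[0,\infty)$.
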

\begin{proof} When $k=1$, (\ref{Alg}) holds
trivially. Suppose that (\ref{Alg}) holds when  $k=m$, that is,
 \be \label{Assum}
 \left(\sum_{i=1}^m a_i^s\right)\left(\sum_{i=1}^m a_i^2 b_i
\right)\leq \left(\sum_{i=1}^m a_i^{s+1} \right)\left(\sum_{i=1}^m
a_i b_i\right).
\en
Then when $k=m+1$, we have by using (\ref{Assum}) and the hypothesis on $\{a_i\}_{i=1}^k$  and $\{b_i\}_{i=1}^k$  that
\begin{equation*}
\begin{aligned}
&\sum_{i=1}^{m+1} a_i^{s+1} \sum_{i=1}^{m+1}
a_i b_i-\sum_{i=1}^{m+1} a_i^s\sum_{i=1}^{m+1} a_i^2 b_i\\
&=\sum_{i=1}^m a_i^{s+1} \sum_{i=1}^m
a_i b_i -\sum_{i=1}^m a_i^s\sum_{i=1}^m a_i^2 b_i
+ a_{m+1}^{s+1}\sum_{i=1}^m
a_ib_{i}\\
&\quad-a_{m+1}^2b_{m+1}\sum_{i=1}^m
a_i^s + a_{m+1}b_{m+1}\sum_{i=1}^m
a_i^{s+1}-a_{m+1}^s\sum_{i=1}^m
a_i^2b_i\\
&\geq a_{m+1}^{s+1}\sum_{i=1}^m
a_ib_{i}-a_{m+1}^2b_{m+1}\sum_{i=1}^m
a_i^s + a_{m+1}b_{m+1}\sum_{i=1}^m
a_i^{s+1}-a_{m+1}^s\sum_{i=1}^m
a_i^2b_i\\
&=\sum_{i=1}^m a_{m+1}a_i (b_{m+1}a_i^{s-1}-b_ia_{m+1}^{s-1})(a_i-a_{m+1})\geq 0.
\end{aligned}
\end{equation*}

Thus (\ref{Alg}) holds by induction.    $\hfill\square$

Now let us get (\ref{CY}) by using (\ref{mineq}). Multiplying (\ref{mineq}) by $\left(\sum_{i=1}^k(\si_{k+1}-\si_i)\right)^2$, we get
\begin{equation}\label{Eq19}
\begin{aligned}
&\left(\sum_{i=1}^k(\si_{k+1}-\si_i)\right)^2 \left(\sum_{i=1}^k (\si_{k+1}-\si_i)^2\right)\\
&\leq\fr{4(n+\alpha)}{n^2}\left(\sum_{i=1}^k(\si_{k+1}-\si_i)\right)^2\left(\sum_{i=1}^k (\si_{k+1}-\si_i)\si_i\right).
\end{aligned}
\end{equation}

Taking $s=2, \ a_i=(\si_{k+1}-\si_i)^{1/2}, \ b_i=\si_i$ in (\ref{Alg}), we get
\begin{equation}\label{Eq20}
\begin{aligned}
&\Big(\sum_{i=1}^k(\si_{k+1}-\si_i)\Big)\Big(\sum_{i=1}^k (\si_{k+1}-\si_i)\si_i\Big)\\
&\leq  \Big(\sum_{i=1}^k(\si_{k+1}-\si_i)^{3/2}\Big)\Big(\sum_{i=1}^k (\si_{k+1}-\si_i)^{1/2}\si_i\Big).
\end{aligned}
\end{equation}

Taking $s=3, \ a_i=(\si_{k+1}-\si_i)^{1/2}, \ b_i\equiv 1$ in (\ref{Alg}), we have
\be \label{Eq21}
& &
\left(\sum_{i=1}^k(\si_{k+1}-\si_i)^{3/2}\right)\left(\sum_{i=1}^k (\si_{k+1}-\si_i)\right)
\\ \no
&\leq & \left(\sum_{i=1}^k(\si_{k+1}-\si_i)^2\right)\left(\sum_{i=1}^k (\si_{k+1}-\si_i)^{1/2}\right).
\en
It is then easy to obtain (\ref{CY}) from (\ref{Eq19})-(\ref{Eq21}).
\end{proof}
\end{rem}

\section{Proof of Theorems \ref{EPbi1}-\ref{EPbu2} }
\setcounter{equation}{0}
In this section, we will prove theorems \ref{EPbi1}-\ref{EPbu2}. Before doing this, let us  recall the Reilly formula. Let $M$ be $n$-dimensional compact manifold $M$ with boundary $\pa M$. We will often write $\lan, \ran$ the Riemannian metric on $M$ as well as that induced on $\pa M$. Let $\na$ and $\D $ be the connection  and the Laplacian on $M$,
respectively. Let $\nu$ be the unit outward normal vector of $\pa M$. The shape operator of $\pa M$ is given by $S(X)=\na_X \nu$ and the second fundamental form of $\pa M$ is defined as $II(X, Y)=\lan S(X), Y\ran$, here $X, Y\in T \pa M$. The eigenvalues of $S$ are called the principal curvatures of $\pa M$ and the mean curvature $H$ of $\pa M$ is given by $H=\fr 1{n-1} {\rm tr\ } S$, here ${\rm tr\ } S$ denotes the trace of $S$.
For a smooth function $f$ defined on an $n$-dimensional compact manifold $M$ with boundary $\pa M$, the following identity holds if
$h=\left.\fr{\pa f}{\pa \nu}\right|_{\pa M}$, $z=f|_{\pa M}$ and ${\rm Ric}$ denotes the Ricci tencor of $M$ (Cf. \cite{Reilly77}, p. 46):
\be \label{Reilly}
& & \int_M \left((\D f)^2-|\na^2 f|^2-{\rm Ric}(\na f, \na f)\right)\\ \no
&=& \int_{\pa M}\left( ((n-1)Hh+2\overline{\D}z)h + II(\overline{\na}z, \overline{\na}z)\right).
\en
Here $\na^2 f$ is the Hessian of $f$; $\ov{\D}$ and $\ov{\na}$  represent the Laplacian and the gradient on $\pa M$ with respect
to the induced metric on $\pa M$,  respectively.

\vs
\noindent
{\it Proof of Theorem \ref{EPbi1}.} Let $u$ be an eigenfunction of the problem (\ref{EP2}) corresponding to the first eigenvalue $\Gamma_1$. That is,
  \be\no  \Delta^2 u= \Gamma_1  u   \ \ {\rm in \ \ } M, \ \ u= \fr{\pa u}{\pa \nu}=0 \ \ {\rm on \ \ } \pa M.
\en
 Then we have
 \be \label{Ebi1}
 \Gamma_1=\fr{\int_M (\D u)^2}{\int_M u^2}.
 \en
 Introducing $u$ into Reilly's formula, it follows that
\begin{equation}\label{Ebi2}
\int_M \left((\D u)^2-|\na^2 u|^2\right)=\int_M{\rm Ric}(\na u, \na u)
\geq (n-1)\int_M |\na u|^2.
\end{equation}
 From the Schwarz inequality, we have
\be \label{Sch}
|\na^2 u|^2\geq \fr 1n (\D u)^2
\en
with equality holding if and only if
\be\no
\na^2 u =\fr{\D u}n \lan , \ran.
\en
Thus we have from (\ref{Ebi2}) and (\ref{Sch}) that
\be \label{Ebi3}
\int_M (\D u)^2 \geq n\int_M |\na u|^2.
\en
Since $u$ is not a zero function which vanishes on $\pa M$, we have from the Poincar\'e inequality that
\be \label{Ebi4}
\int_M |\na u|^2\geq \lambda_1 \int_M u^2
\en
with equality holding if and only if $u$ is a first eigenfunction of the Dirichlet Laplacian of $M$.

Combining (\ref{Ebi1}), (\ref{Ebi3}) and (\ref{Ebi4}), we get $\Gamma_1\geq n\lambda_1$. Let us show that the case $\Gamma_1=n\lambda_1$ will not occur.
Indeed, if $\Gamma_1=n\lambda_1$, then we must have
\be \label{Ebi5}
|\na^2 u|^2= \fr 1n (\D u)^2, \ \ \ {\rm Ric}(\na u, \na u)=(n-1)|\na u|^2, \ \D u=-\lam_1 u.
\en
Hence
\be
\Gamma_1 u= \D (\D u)=\D (-\lambda_1 u)= \lambda_1^2 u \ \ \ {\rm in }\ \ \ M
\en
which implies that $\lambda_1=n$. Consequently,
we get from the Bochner formula that
\be\no
\fr 12 \D (|\na u|^2+u^2)&=&|\na^2u|^2+\lan \na u, \na(\D u)\ran +{\rm Ric}(\na u, \na u) +|\na u|^2+ u\D u\\ \no
&=& \fr{(\D u)^2}n -n\lan \na u, \na u\ran + (n-1)|\na u|^2+|\na u|^2-n u^2\\ \no
&=&0.
\en
Since $(|\na u|^2+u^2)|_{\pa M}=0,$ we conclude from the maximum principle that $|\na u|^2+u^2=0$ on $M$. This is  a contradiction and completes the proof of Theorem 1.5. $\hfill\square$

\vs
\noindent
{\it Proof of Theorem \ref{EPbu1}.} Let $w$ be an eigenfunction of the problem (1.8) corresponding to the first eigenvalue $\Lambda_1$. That is,
  \be \label{Ebu1}
   \Delta^2 w= -\Lambda_1\D  w   \ \ {\rm in \ \ } M,\ \ w= \fr{\pa w}{\pa \nu}=0 \ \ {\rm on \ \ } \pa M.
\en
 Then we have
 \be \label{Ebu2}
 \Lambda_1=\fr{\int_M (\D w)^2}{\int_M |\na w|^2}.
 \en
 As in the proof of Theorem \ref{EPbi1}, we have by introducing $w$ into Reilly's formula that
\be \label{Ebu3}
 \int_M (\D w)^2\geq  n\int_M |\na w|^2
\en
 with equality holding if and only if
\be \label{Ebu4}
\na^2 w =\fr{\D w}n \lan , \ran \ \ \ {\rm and}\ \ \ {\rm Ric}(\na w, \na w)=(n-1)|\na w|^2.
\en
Combining (3.10) and (3.11), we get $\Lambda_1\geq n$. If $\Lambda_1=n$, then   (\ref{Ebu4}) holds.  Since
\be
w|_{\pa M}= \left.\fr{\pa w}{\pa \nu}\right|_{\pa M}=0,
\en
we have
\be
\D w|_{\pa M}=\na^2 w(\nu,\nu),
\en
which, combining with $\na^2 w=\fr{\D w}n \lan, \ran$, implies that
\be
\D w|_{\pa M}=0.
\en
It then follows from the divergence theorem that
\be\no
\int_M |\na(\D w+ nw)|^2&=&-\int_M (\D w+ nw)\D (\D w+ nw)\\ \no &=&-\int_M (\D w+ nw) (\D^2 w+ n\D w)=0.
\en
Hence
\be
\D w+nw =0\ \ \ {\rm in} \ \ \ M.
\en
Consequently, we get
 \be\no
\fr 12 \D (|\na w|^2+w^2)&=&|\na^2w|^2+\lan \na w, \na(\D w)\ran +{\rm Ric}(\na w, \na w) +|\na w|^2+ w\D w\\ \no
&=& \fr{(\D w)^2}n -n\lan \na w, \na w)\ran + (n-1)|\na w|^2+|\na w|^2-n w^2\\ \no
&=&0.
\en
 We then conclude from $(|\na w|^2+w^2)|_{\pa M}=0$ and the maximum principle that $|\na w|^2+w^2=0$. This is a contradiction and so $\Lambda_1>n$. The proof of Theorem \ref{EPbu1} is completed.$\hfill\square$

\vs
\noindent
{\it Proof of Theorem \ref{EPbi2}.} Let $f$ be the eigenfunction of the problem (\ref{EP3}) corresponding to the first eigenvalue $p_1$. That is,
 \begin{equation}\label{Ebi6}
\left \{ \aligned &\Delta^2 f= p_1  f   \ \ {\rm in \ \ } M, \\
 &f= \fr{\pa^2 f}{\pa \nu^2}=0 \ \ {\rm on \ \ } \pa M.
 \endaligned \right.
 \end{equation}
Multiplying (\ref{Ebi6}) by $f$ and integrating on $M$, we have from the divergence theorem that
\be \label{Ebi7}
p_1\int_M f^2&=& \int_M f\D^2 f\\ \no &=&-\int_{M}\lan \na f, \na(\D f)\ran \\ \no &=&
\int_M (\D f)^2 -\int_{\pa M} h\D f,
\en
where $h=\left.\fr{\pa f}{\pa \nu}\right|_{\pa M}.$ Since $f|_{\pa M}=\left. \fr{\pa^2 u}{\pa \nu^2}\right|_{\pa M}=0$, we have
\be \label{Ebi8}
\D f|_{\pa M} = (n-1) H h,
\en
where $H$ is the mean curvature of $\pa M$.

Substituting (\ref{Ebi8}) into (\ref{Ebi7}), we get
\be \label{Ebi88}
p_1=\fr{\int_M (\D f)^2- (n-1)\int_{\pa M} Hh^2}{\int_M f^2}.
\en
Introducing $f$ into Reilly's formula, we have
\be \label{Ebi9}
 \int_M \left((\D f)^2-|\na^2 f|^2\right)&=& \int_M{\rm Ric}(\na f, \na f)+(n-1)\int_{\pa M}Hh^2
\\ \no &\geq& (n-1)\int_M |\na f|^2+(n-1)\int_{\pa M}Hh^2.
\en
It follows from the Schwarz inequality that
\be \label{Ebi10}
|\na^2 f|^2\geq \fr 1n (\D f)^2
\en
with equality holding if and only if
\be\no
\na^2 f =\fr{\D f}n \lan , \ran.
\en
Combining  (\ref{Ebi9}) and (\ref{Ebi10}), one gets
\be \label{Ebi11}
\int_M (\D f)^2 \geq n\int_M |\na f|^2 + n\int_{\pa M}Hh^2.
\en
Since $H\geq 0$, we have from (\ref{Ebi88}) and (\ref{Ebi11}) that
\be \label{Ebi12}
p_1\geq \fr{\int_M |\na f|^2}{\int_M f^2}.
\en
On the other hand, since $f$ is not a zero function which vanishes on $\pa M$, we have from the Poincar\'e inequality that
\be \label{Ebi13}
\int_M |\na f|^2\geq \lambda_1 \int_M f^2
\en
with equality holding if and only if $f$ is a first eigenfunction of the Dirichlet Laplacian of $M$.
Thus we conclude that $p_1\geq n\lambda_1$.  This finishes the proof of the first part of Theorem \ref{EPbi2}. Assume now that $p_1=n \lambda_1$. In this case, (\ref{Ebi13}) should take equality sign which implies that $f$ is a first eigenfunction corresponding to the the first eigenvalue $\lambda_1$ of the Dirichlet Laplacian of $M$. That is, we have
 \be
  \Delta f= -\lambda_1  f   \ \ {\rm in \ \ } M,\ \   f|_{\pa M}=0.
\en
It then follows that
 \be
  \Delta^2 f= -\lambda_1\D  f =\lambda_1^2 f \ \ {\rm in \ \ M}
 \en
 which, combining with (\ref{Ebi6}) gives $\lambda_1=n$. We then conclude from Reilly's theorem as stated before that $M$ is isometric to an $n$-dimensional unit semi-sphere. Consider now the $n$-dimensional unit semi-sphere $S^n_+(1)$ given by
 \be\no
  S^n_+(1)=\left\{(x_1,..., x_{n+1})\in {\bf R}^{n+1}\left. \right| \sum_{i=1}^{n+1} x_i^2=1,  \ x_{n+1}\geq 0\right\}
 \en
 It is easy to see that the function $x_{n+1}$ on  $S^n_+(1)$ is an eigenfunction of the problem (\ref{EP4}) corresponding to the eigenvalue $n^2$ and the first Dirichlet eigenvalue $\lam_1$ of $S^n_+(1)$ is $n$. Thus the first eigenvalue of the problem (\ref{EP4}) of  $S^n_+(1)$ is $n\lam_1$.
 This completes the proof of Theorem \ref{EPbi2}.$\hfill\square$

 \vs
\noindent
 {\it Proof of Theorem \ref{EPbu2}.} The discussion is similar to the proof of Theorem 1.7. For the sake of completeness, we include it.
 Let $g$ be the eigenfunction of the problem (\ref{EP5}) corresponding to the first eigenvalue $q_1$:
\begin{equation}\label{Ebu5}
\left \{ \aligned &\Delta^2 g= -q_1 \D g   \ \ {\rm in \ \ } M, \\
&g= \fr{\pa^2 g}{\pa \nu^2}=0 \ \ {\rm on \ \ } \pa M.
\endaligned \right.
\end{equation}

Multiplying (\ref{Ebu5}) by $g$ and integrating on $M$, we have from the divergence theorem that
\be \label{Ebu6}
p_1\int_M |\na g|^2=
\int_M (\D g)^2 -\int_{\pa M} s\D g,
\en
where $s=\left.\fr{\pa g}{\pa \nu}\right|_{\pa M}.$  Also, we have
\be \label{Ebu7}
\D g|_{\pa M} = (n-1) H s.
\en
Hence
\be
q_1=\fr{\int_M (\D g)^2- (n-1)\int_{\pa M} Hs^2}{\int_M|\na g|^2}.
\en
Introducing $g$ into Reilly's formula and using Schwarz inequality, we have
\be
 \int_M (\D g)^2\geq  n\int_M |\na g|^2+n\int_{\pa M}Hs^2.
\en
Consequently, we get $q_1\geq n$. In the case that $q_1=n$, we must have $Hs=0$ and so we know from (\ref{Ebu7}) that $\D g|_{\pa M}=0$.
Observe that $\D g$ is not a zero function on $M$ since otherwise $g$ would be identically zero by the maximum principle. Thus $\D g$ is an eigenfunction corresponding the the  eigenvalue $n$ of the Dirichlet Laplacian of $M$. It then follows from Reilly's theorem that $M$ is isometric to an $n$-dimensional Euclidean semi-sphere. One can check that the function $x_{n+1}$ given in the proof of Theorem \ref{EPbi2} is an eigenfunction of the problem (\ref{EP4}) for the $n$-dimensional unit semi-sphere corresponding to the eigenvalue $n$. This completes the proof of Theorem \ref{EPbu2}.$\hfill\square$

\begin{flushleft}
Daguang Chen\\
Department  of Mathematical Sciences\\
Tsinghua University \\
Beijing 100084, China \\
e-mail: dgchen@math.tsinghua.edu.cn
\end{flushleft}

\begin{flushleft}
Qing-Ming Cheng  \\
Department of Mathematics   \\
Faculty of Science and Engineering   \\
Saga University  \\
Saga 840-8502,  Japan \\
e-mail: cheng@ms.saga-u.ac.jp
\end{flushleft}

\begin{flushleft}
Qiaoling Wang\\
Departamento de Matem\'atica\\
Universidade de
Bras\'{\i}lia\\
Bras\'{\i}lia-DF 70910-900, Brazil\\
e-mail: wang@mat.unb.br
\end{flushleft}

\begin{flushleft}
Changyu Xia\\
Departamento de Matem\'atica\\
Universidade de
Bras\'{\i}lia\\
Bras\'{\i}lia-DF 70910-900 , Brazil\\
e-mail: xia@mat.unb.br
\end{flushleft}


\begin{thebibliography}{99}

\bibitem{Ash99}M.~S. Ashbaugh, Isoperimetric and universal inequalities for eigenvalues, in Spectral theory and geometry (Edingurgh, 1998), E. B. Davies and Yu Safalov eds,. London Math. Soc. Lecture Notes, vol. 273, Cambridge Univ.
    Press, cambridge, 1999, pp. 95-139.

\bibitem{Ash02}M.~S. Ashbaugh, Universal eigenvalue bounds of Payne-P\'olya-Weinberger,
Hile-Protter and H C Yang, Proc. India Acad. Sci. Math. Sci.
112 (2002), 3-30.
\bibitem{Ash04} M. S. Ashbaugh, L. Hermi, A unified approach to universal inequalities for
eigenvalues of elliptic operators, Pacific J. Math. 217 (2004), 201-219.
\bibitem{Chav84} I. Chavel,  Eigenvalues in Riemannian geometry. Including a chapter by Burton Randol. With an appendix by Jozef Dodziuk. Pure and Applied Mathematics, 115. Academic Press, Inc., Orlando, FL, 1984. xiv+362 pp.
\bibitem {CC08}D.  Chen  and Q. -M. Cheng, Extrinsic estimates for eigenvalues of the Laplace operator, J. Math. Soc. Japan, 60(2008), 325-339.
\bibitem{[CIM1]}
 Q. -M. Cheng, T. Ichikawa and S. Mametsuka,  { Inequalities for eigenvalues of Laplacian with any order},
 Commun. Contemp. Math., 11(2009), 639-655.

 \bibitem{[CIM2]}
 Q. -M. Cheng, T. Ichikawa and S. Mametsuka,  { Estimates for eigenvalues of the poly-Laplacian with any order in a unit sphere}, Calculus of Variation and Partial Differential Equations,  36 (2009), 507-523.

\bibitem{[CIM3]}

Q. -M. Cheng, T. Ichikawa and S. Mametsuka,  {
Estimates for eigenvalues of a clamped plate problem on Riemannian
manifolds},  J. Math. Soc. Japan, 62 (2010), 673-686.

\bibitem{CY05}Q. -M. Cheng and H.C. Yang, Estimates on eigenvalues of Laplacian, Math. Ann. 331 (2005), 445-460.
\bibitem{CY061}Q. -M. Cheng and H. C. Yang, Inequalities for eigenvalues of Laplacian on domains and compact complex hypersurfaces in complex projective spaces, J. Math. Soc. Japan,  58 (2006), 545-561.
\bibitem{CY062}Q. -M. Cheng and H. C. Yang, Inequalities for eigenvalues of a clamped plate problem, Trans. Amer. Math. Soc.,  358 (2006), 2625-2635.
\bibitem{CY063} Q. -M. Cheng and H. C. Yang, Universal bounds for eigenvalues of a buckling problem,
Commun. Math. Phys. 262 (2006), 663-675.

\bibitem {CY07} Q. -M. Cheng and H. C. Yang, Bounds on eigenvalues of Dirichlet Laplacian, Math. Ann.,  {\bf 337} (2007), 159-175.


\bibitem{CY09}Q. -M. Cheng and H. C. Yang,     Universal inequalities for eigenvalues of a system of elliptic equations , Proc. Royal Soc. Edinburgh, 139 A(2009), 273-285.
\bibitem{Esc90}J. F. Escobar,  Uniqueness theorems on conformal deformation of metrics, Sobolev inequalities, and an eigenvalue estimate. Comm. Pure Appl. Math. 43 (1990), 857--883.
\bibitem{HSI09}E. M. Harrell II, A. El Soufi,  and S. Ilias, Universal inequalities for the eigenvalues of Laplace and
Schr\"odinger operator on submanifolds,  Trans. Amer. Math. Soc., 361  (2009),  no. 5, 2337--2350.
\bibitem{H93}E.~M.~Harrell, Some geometric bounds on eigenvalue gaps, Commun. Part.
Differ. Equ.  18 (1993), 179-198.
\bibitem{H03}E.~M.~Harrell, Commutators, eigenvalue gaps, and mean curvature in the theory of Schr\"odinger operators. Comm.
    Part. Differ. Equ  32 (2007),  401--413.
\bibitem{HM94}E.~M.~Harrell and P.~L.~Michael, Commutator bounds for eigenvalues, with applications to spectral geometry, Commun. Part. Differ. Equ.
    19 (1994), 2037-2055.
\bibitem{HM95}E.~M.~Harrell and P.~L.~Michael, Commutator bouds for eigenvalues of some differential operators, Lecture Notes in Pure and Applied Mathematics, vol. 168(eds) G Ferreyra, G R Goldstein and F Neubrander (New York: Marcel Dekker) (1995) pp. 235-244.
\bibitem{HS97}E.~M.~Harrell and J.~Stubbe, On trace inequalities and the universal eigenvalue estimates for some partial differential operators, Trans. Am. Math. Soc. 349 (1997), 1797-1809.
\bibitem{HY09} E. M. Harrell, S. Y. Yolcu, Eigenvalue inequalities for Klein-Gordon operators,  J. Funct. Anal.  256  (2009),   3977--3995.
\bibitem{HP80}G.~N.~Hile and M.~H.~Protter, Inequalities for eigenvalues of the Laplacian, Indiana Univ. Math. J.  29 (1980), 523-538.
\bibitem{Hook90}S.~M.~Hook, Domain independent upper bounds for eigenvalues of elliptic operator, Trans. Amer. Math. soc.  318 (1990), 615-642.
\bibitem{KS97} B. Kawohl and G. Sweers, Remarks on eigenvalues and eigenfunctions of a special elliptic
system, Z. Angew. Math. Phys. 38 (1987), 730-740.
\bibitem{Lev02}M.  Levitin, Commutators, spectral trace identities, and universal
estimates for eigenvalues, J. Funct. Anal. 192, 425-445 (2002)
\bibitem{LevPro85} H. A. Levine and M. H. Protter, Unrestricted lower bounds for eigenvalues of elliptic equations
and systems of equations with applications to problems in elasticity, Math. Methods Appl.
Sci. 7 (1985), 210-222.
\bibitem{LPar02} M. Levitin and L. Parnovski, Commutators, spectral trance identities, and universal estimates
for eigenvalues, J. Functional Analysis, 192 (2002), 425-445.
\bibitem{PPW55}L.~E.~Payne, G.~P\'olya and H.~F.~Weiberger, sur le quotient de deux fr\'equences propres cos\'ecutives, Comptes Rendus Acad. Sci. Paris  241 (1955), 917-919.
\bibitem{PPW56}L.~E.~Payne, G.~P\'olya and H.~F.~Weiberger, On the ratio of consecutive eigenvalues, J. Math. and Phys.  35 (1956), 289-298.

\bibitem {P39}A. Pleijel, Propriet\'{e}s asymptotique des fonctions fondamentales du problems des vibrations
dans un corps \'{e}lastique, Ark. Mat. Astr. Fys. {\bf26} (1939), 1-9.

\bibitem{Reilly77}R.~Reilly,  Applications of the Hessian operator in a Riemannian manifold, Indiana Univ. Math. J. 26 (1977), no. 3, 459-472.
\bibitem{WX071}Q. Wang, C. Xia,  Universal bounds for eigenvalues of the buckling problem on spherical domains, Comm. Math. Phsy. 270 (2007), 759-775.
\bibitem{WX072}Q.~Wang, C.~Xia, Universal bounds for eigenvalues of the biharmonic operator on Riemannian manifolds, J. Funct. Anal.  245 (2007), 334-352.
\bibitem{WX08}Q.~Wang, C.~Xia, Universal bounds for eigenvalues of Schrodinger  operator on Riemannian manifolds, Ann. Acad. Sci. Fen. Math.
33 (2008), 319-336.
\bibitem{Yang91}H.~C.~Yang, An estimate of the difference between cosecutive eigenvalues,
preprint IC/91/60 of ICTP, Trieste, 1991.
\bibitem{Xia91}C.~Xia,  The first nonzero eigenvalue for manifolds with Ricci curvature having positive lower bound. Chinese mathematics into the 21st century (Tianjin, 1988), 243-249, Peking Univ. Press, Beijing, 1991.
\end{thebibliography}
\end{document}